\numberwithin{equation}{section}
\DeclareMathOperator{\Res}{Res}
\newcommand{\Q}{\mathbb{Q}}
\newcommand{\PP}{\mathbb{P}}
\newcommand{\Z}{\mathbb{Z}}
\newcommand{\ZZ}{\mathbb{Z}}
\newcommand{\QQ}{\mathbb{Q}}
\newcommand{\ve}{\varepsilon}
\newcommand{\FF}{\mathbb{F}}
\newcommand{\NN}{\mathbb{N}}
\renewcommand{\epsilon}{\varepsilon}
\newtheorem{theorem}{Theorem}[section]
\newtheorem{corollary}[theorem]{Corollary}
\newtheorem{lemma}[theorem]{Lemma}
\newtheorem{conjecture}[theorem]{Conjecture}
\theoremstyle{definition}
\renewcommand{\leq}{\leqslant}
\renewcommand{\geq}{\geqslant}
\begin{document}

\author{Tim Browning}
\author{Stephanie Chan}
\address{IST Austria\\
Am Campus 1\\
3400 Klosterneuburg\\
Austria}
\email{tdb@ist.ac.at}
\email{stephanie.chan@ist.ac.at}

\title{Solubility of a resultant equation and applications}

\date{\today}

\begin{abstract}
The large sieve is used to estimate the density of quadratic polynomials $Q\in \ZZ[x]$, such that there exists an odd degree polynomial defined over $\ZZ$ which 
has  resultant 
 $\pm 1$  with $Q$. 
 Given a monic polynomial $R\in \ZZ[x]$ of odd degree, this is  used to show that for almost all quadratic  polynomials $Q\in \ZZ[x]$, there exists a prime $p$ such that $Q$ and $R$ share  a common root in $\overline\FF_p$.
 Using recent work of Landesman, an application to the average size of the  odd part of the class group of  quadratic number fields is also given.
 \end{abstract}

\subjclass[2010]{11R29 (11G50, 11N36, 11R11, 11R45)}

\maketitle

\thispagestyle{empty}
 \setcounter{tocdepth}{1}
 {\small
 \begin{multicols}{2}
 \tableofcontents
 \end{multicols}
 }

\section{Introduction}

Let $Q\in \ZZ[x]$ be a quadratic polynomial
and let $R\in \ZZ[x]$ be a  polynomial of degree $n$.
Associated to $Q$ and $R$ is the 
resultant $\Res(R,Q)$. This is a polynomial of degree $n+2$ in the coefficients of $Q$ and $R$, which is defined over $\ZZ$.
Given any  fixed polynomial $R\in \ZZ[x]$ of degree  $n\geq 5$, it follows from work of  Schmidt \cite[Thm.~1]{Schmidt} that there are only finitely many  irreducible quadratic polynomials $Q\in \ZZ[x]$ such that $\Res(R,Q)=\pm 1$.
In this paper we shall concern ourselves  with the
opposite situation, and attempt to assess the 
 density of quadratic $Q\in \ZZ[x]$ 
for which 
$\Res(R,Q)=\pm 1$,  for some $R\in \ZZ[x]$ of odd degree. 
More specifically, we shall be interested in the size of the counting function
\begin{equation}\label{eq:NB'}
N(B)=\#\left\{(a,b,c)\in \ZZ^3: 
\begin{array}{l}
|a|,|b|,|c|\leq B\\
\Res(R,ax^2+bx+c)=\pm 1\\ 
\text{for some $R\in \ZZ[x]$ of odd degree}
\end{array}
\right\},
\end{equation}
as $B\to \infty$, with the aim of showing that $N(B)=o(B^3)$.

By using only mod $p$ information, we shall apply the large sieve to prove the following upper bound for $N(B)$.

\begin{theorem}\label{t:1}
We have 
$
N(B)\ll B^3/\sqrt{\log B}$.
\end{theorem}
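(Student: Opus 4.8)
The plan is to turn $\Res(R,Q)=\pm1$ into a ``half‑dimensional'' family of quadratic‑residue restrictions on $(a,b,c)$ and then feed these into the large sieve. First I would reduce to resultant $+1$: since $\deg R$ is odd, $\Res(R,-Q)=(-1)^{\deg R}\Res(R,Q)=-\Res(R,Q)$, so $(a,b,c)\mapsto(-a,-b,-c)$ is a bijection between the triples admitting an odd‑degree $R$ with $\Res(R,ax^2+bx+c)=-1$ and those admitting one with resultant $+1$. Hence $N(B)\leq 2M(B)$, where
\[
M(B)=\#\Big\{(a,b,c)\in\ZZ^3:\ |a|,|b|,|c|\leq B,\ \Res(R,ax^2+bx+c)=1\ \text{for some odd-degree }R\in\ZZ[x]\Big\},
\]
and it suffices to prove $M(B)\ll B^3/\sqrt{\log B}$.

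Next I would extract the congruence condition that drives the sieve. Suppose $Q=ax^2+bx+c$ has discriminant $\Delta=b^2-4ac$ and $\Res(R,Q)=1$ for some $R$ of odd degree $d$, and let $p$ be an odd prime with $p\nmid a$ and $p\mid\Delta$. Then $Q\equiv a(x-\overline r)^2\pmod p$ for some $\overline r\in\FF_p$; equivalently, after reduction modulo a suitable prime above $p$, the two roots $\theta_1,\theta_2$ of $Q$ (which are $p$-integral because $p\nmid a$) both specialise to $\overline r$. The product formula $\Res(R,Q)=\operatorname{lc}(Q)^{\deg R}\prod_{Q(\theta)=0}R(\theta)=a^{d}R(\theta_1)R(\theta_2)$ then gives
\[
1=\Res(R,Q)\equiv a^{d}\,\overline R(\overline r)^2\pmod p .
\]
In particular $\overline R(\overline r)\neq0$, so $\overline R(\overline r)^2$ is a nonzero square in $\FF_p$, and comparing Legendre symbols, using that $d$ is odd,
\[
1=\leg1p=\leg ap^{d}=\leg ap .
\]
Thus $\leg ap=1$ for \emph{every} odd prime $p\mid\Delta$ with $p\nmid a$. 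This is precisely where the oddness of $\deg R$ is used; the degenerate cases $\Delta=0$, $Q$ reducible, and $p\mid\operatorname{lc}(R)$ cause no difficulty, as the displayed congruence refers only to the roots of $Q$.

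Now I would run the large sieve. Let $\mathcal S$ be the set counted by $M(B)$. By the previous step, for every odd prime $p$ the reduction mod $p$ of any $(a,b,c)\in\mathcal S$ lies outside
\[
\Omega_p=\Big\{(\overline a,\overline b,\overline c)\in\FF_p^{3}:\ \overline a\neq0,\ \overline b^{2}=4\overline a\,\overline c,\ \leg{\overline a}p=-1\Big\},
\]
and for each of the $\tfrac{p-1}{2}$ admissible values of $\overline a$ and each of the $p$ values of $\overline b$ the coordinate $\overline c$ is forced, so $\#\Omega_p=\tfrac12 p(p-1)$ and $\#\Omega_p/(p^{3}-\#\Omega_p)\sim\tfrac1{2p}$. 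The multidimensional large sieve, applied with sieving level $\sqrt B$ and with the prime $p=2$ omitted, then gives $M(B)\ll B^{3}/L$ with
\[
L=\sum_{\substack{q\leq\sqrt B\\ q\ \mathrm{odd\ squarefree}}}\ \prod_{p\mid q}\frac{\#\Omega_p}{p^{3}-\#\Omega_p}\ \asymp\ (\log B)^{1/2},
\]
the last estimate being the standard bound $\sum_{n\leq x}g(n)\asymp(\log x)^{\kappa}$ for a non-negative multiplicative $g$ with $g(p)\sim\kappa/p$, here with $\kappa=\tfrac12$. Consequently $M(B)\ll B^3/\sqrt{\log B}$, and therefore $N(B)\ll B^3/\sqrt{\log B}$.

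The heart of the argument — and the step I expect to require the most care — is the congruence $1\equiv a^{d}\overline R(\overline r)^2\pmod p$: one must check that the primes at which $\overline Q$ is a unit times a perfect square are exactly those producing a local obstruction, that this obstruction genuinely constrains $\leg ap$ rather than being absorbed into the freedom of choosing $R$ modulo $p$, and that it survives the degenerate configurations noted above. Everything downstream — the exact shape of the large sieve inequality, the choice of level, and the multiplicative-function bookkeeping that outputs the exponent $\tfrac12$ — is routine, and the value of the implied constant is immaterial.
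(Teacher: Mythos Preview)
Your argument is correct, and in fact it takes a cleaner route than the paper's own proof. Both proofs reduce to a Legendre-symbol constraint on $(a,b,c)$ coming from $\Res(R,Q)=\pm1$ and then apply the large sieve to obtain the $\sqrt{\log B}$ saving, but the derivation of that constraint is different. The paper develops explicit determinantal identities for $\Res(R,Q)$ (its Lemmas~2.1--2.3), showing that $\Res(R,Q)=1$ forces the conic $X^2-(b^2-4ac)Y^2=cZ^2$ to have a rational point, and hence that $\bigl(\tfrac{c}{p}\bigr)\neq-1$ whenever $v_p(b^2-4ac)=1$; detecting the condition $v_p(\Delta)=1$ then requires sieving modulo $p^2$. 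You bypass all of this by applying the product formula $\Res(R,Q)=a^{d}R(\theta_1)R(\theta_2)$ directly: reducing at an odd prime $p$ with $p\mid\Delta$ and $p\nmid a$ collapses the two roots and yields $1\equiv a^{d}\bar R(\bar r)^2\pmod p$, whence $\bigl(\tfrac{a}{p}\bigr)=1$ since $d$ is odd. This is a mod~$p$ condition, so your sieve runs with prime moduli rather than prime-square moduli, and the excluded set $\Omega_p$ is computed in one line. The payoff is a shorter, more elementary proof that avoids the resultant calculus of Section~2; the payoff of the paper's route is that the conic formulation connects the problem to local solubility and to the class-group applications treated later.

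Two small points worth recording when you write this up. First, you should dispose of the triples with $a=0$ (there are $O(B^2)$ of them) before invoking the root formula for the resultant, since that formula presupposes $\deg Q=2$; your sieve inequality then absorbs this error term. Second, in the large sieve you should note that the level $Q=\sqrt{B}$ matches the three-dimensional box of side $B$ via $(B+Q^2)^3\ll B^3$; the subsequent estimate $L\asymp(\log B)^{1/2}$ for the multiplicative sum with $g(p)\sim\tfrac{1}{2p}$ is indeed standard (e.g.\ Wirsing-type mean value theorems or \cite[Thm.~A.5]{FrIwOdC}).
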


It is also interesting to ask about the behaviour of the function
\begin{equation}\label{eq:NB}
N_n(B)=\#\left\{(a,b,c)\in \ZZ^3: 
\begin{array}{l}
|a|,|b|,|c|\leq B\\
\Res(R,ax^2+bx+c)=\pm 1\\ 
\text{for some $R\in \ZZ[x]$ of degree $n$}
\end{array}
\right\},
\end{equation}
for  fixed  $n\in \NN$.
Theorem \ref{t:1} upper bounds  $N_n(B)$ when $n$ is odd. 
Taking $R(x)=x^n$, it is easily checked that 
  $\Res(x^n,ax^2+bx+c)=c^n$. Hence we have 
  $
  N_n(B)\gg B^2,
  $ 
  coming from triples $(a,b,c)$ with $c=1$.
In private communication with the authors, Aaron Landesman has raised the following conjecture.

\begin{conjecture}[Landesman]\label{con}
Let $n\geq 2$. Then there exists $m\geq 0$ such that 
 $$N_n(B)\ll B^{2}(\log B)^m.$$
 \end{conjecture}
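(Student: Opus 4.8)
The plan is to convert the solubility of $\Res(R,Q)=\pm1$ into a statement about the order of an explicit ideal class, and then to bound the number of $(a,b,c)$ for which that order divides $n$ by averaging over discriminants. First I would dispose of the degenerate cases. If $\gcd(a,b,c)=g>1$ then $\Res(R,g^{-1}\cdot gQ)=g^{n}\Res(R,g^{-1}Q)$ is divisible by $g^{n}$, so no such $(a,b,c)$ is counted and we may assume $Q$ primitive; and if $D=b^2-4ac$ is a perfect square then $Q$ is reducible, but $\#\{(a,b,c): b^2-4ac=\square,\ |a|,|b|,|c|\le B\}\ll B^2\log B$ by a direct divisor count, already of the desired shape, so these may be set aside. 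For the remaining primitive irreducible $Q$ with $|a|\ge2$, write $\theta=a\beta$ for a root $\beta$ of $Q$, so $\theta$ is a root of $x^2+bx+ac$ and generates the order $\OO_D=\ZZ[\theta]$ of discriminant $D$. Using $\Res(R,Q)=a^{n}R(\beta_1)R(\beta_2)=a^{-n}\Norm_{\OO_D/\ZZ}(a^nR(\beta))$ together with the identity $a^nR(\beta)=\sum_k r_k a^{n-k}\theta^k$, one checks that $\{a^nR(\beta):\deg R\le n\}$ is exactly the ideal $\fa^n$, where $\fa=(a,\theta)=a\ZZ+\theta\ZZ$ has norm $|a|$ and corresponds to the form $[a,-b,c]$. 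Hence $\Res(R,Q)=\pm1$ for some $R$ of degree $n$ if and only if $\fa^n$ is principal, i.e. $[\fa]^n=1$ in $\Cl(\OO_D)$; a short argument shows the generator can be taken with $r_n\ne0$ whenever $|a|\ge2$, so the exact-degree constraint is automatic.

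It then suffices to bound
\[
\mathcal C \;=\; \#\{(a,b,c): 2\le|a|\le B,\ |b|,|c|\le B,\ [\fa]^n=1\ \text{in}\ \Cl(\OO_D)\}
\;=\;\sum_{0<|D|\le 5B^2}\ \sum_{\substack{b^2-4ac=D,\ |a|,|b|,|c|\le B\\ [\fa]^n=1}} 1 .
\]
For a fixed $D$ the inner triples are precisely the integral forms $[a,-b,c]$ of discriminant $D$ lying in the box whose class is killed by $n$. A geometry-of-numbers count should show that, uniformly in $D$, each fixed class contributes $\ll B(\log B)/\sqrt{|D|}$ forms in the box; crucially the box truncates the otherwise infinite $\SL_2(\ZZ)$-orbit in the real quadratic case $D>0$, so the regulator does not enter. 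Summing over the $\#\Cl(\OO_D)[n]$ admissible classes gives
\[
\mathcal C \;\ll\; B\log B\sum_{0<|D|\le 5B^2}\frac{\#\Cl(\OO_D)[n]}{\sqrt{|D|}} .
\]

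By partial summation the conjectured bound $\mathcal C\ll B^2(\log B)^{m}$ now follows from an averaged estimate on the $n$-torsion,
\[
\sum_{0<|D|\le X}\#\Cl(\OO_D)[n]\ \ll_n\ X(\log X)^{m'},
\]
with $X=5B^2$, since this yields $\sum_{|D|\le X}\#\Cl(\OO_D)[n]\,|D|^{-1/2}\ll \sqrt X(\log X)^{m'}$ and hence $\mathcal C\ll B^2(\log B)^{m'+1}$. This last estimate is where the genuine difficulty lies, and I expect it to be the main obstacle. For $n=2$ it is immediate from genus theory, as $\#\Cl(\OO_D)[2]\asymp 2^{\omega(D)}$ has average $\asymp\log X$; for $n=3$ it follows from the Davenport--Heilbronn theorem; and for $n=4,6$ it can be extracted from the known distribution of the $4$-rank together with the $3$-torsion result. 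For general $n$, however, even an $X(\log X)^{O(1)}$ bound for the average of $\#\Cl(\OO_D)[n]$ is a well-known open problem, and it is exactly here that deep input---such as the results of Landesman on the odd part of class groups used elsewhere in this paper---would be required, likely settling the odd values of $n$.

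I would regard the algebraic reduction of the first paragraph and the uniform (in $D$) lattice-point count of the second as technical but essentially routine, with the real content concentrated in the averaged torsion bound. One simplification worth exploiting is that only the \emph{small-norm} ideals $\fa$ (those with $N\fa=|a|\le B$, i.e. classes represented by forms lying in the box) need to be $n$-torsion. A bound restricted to such classes is weaker than the full average of $\#\Cl(\OO_D)[n]$ and may therefore be accessible for ranges of $n$ where the unrestricted average is out of reach; establishing such a restricted bound unconditionally for all $n\ge2$ is, in my view, the decisive step.
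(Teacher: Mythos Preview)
The statement is a conjecture, and the paper does not prove it; it only establishes the negative-discriminant analogue $N_n^-(B)$ conditionally on \eqref{eq:wide} (Theorem~\ref{t:lama}). Your reduction to an averaged $n$-torsion bound is exactly the mechanism the paper uses for $D<0$ (see the proof of Lemma~5.2), so the overall architecture of your proposal matches the paper's.

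The genuine gap is your geometry-of-numbers step for $D>0$. Your assertion that ``the box truncates the otherwise infinite $\SL_2(\ZZ)$-orbit in the real quadratic case $D>0$, so the regulator does not enter'' is false, and the paper singles this out as precisely the obstruction (see the paragraph following Theorem~\ref{t:lama}). Concretely: for an indefinite class of discriminant $0<D\leq B^2$, every reduced form in the cycle satisfies $|a|,|b|,|c|<\sqrt{D}\leq B$ and therefore lies in your box. The cycle length is comparable to the regulator $R_D$, and by the class number formula $h(D)R_D\asymp\sqrt{D}\,L(1,\chi_D)$. Thus for $D\asymp B^2$ with $h(D)$ bounded (conjecturally a positive proportion of real quadratic fields), a \emph{single} class already contributes $\gg\sqrt{D}\asymp B$ forms to the box, not $O\bigl(B\log B/\sqrt{D}\bigr)=O(\log B)$ as you claim. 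Your per-class bound is therefore off by a factor of roughly $B$ in this regime, and the subsequent summation breaks down.

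A secondary point: you write that ``the results of Landesman on the odd part of class groups used elsewhere in this paper'' would likely settle the averaged torsion bound for odd $n$. This misreads the input. Landesman's contribution is the correspondence between $\Res(R,Q)=\pm1$ and class-group torsion that underlies your first paragraph; it does not supply any estimate of the form $\sum_{|D|\leq X}\#\Cl(\OO_D)[n]\ll X(\log X)^{m'}$. That estimate is the Cohen--Lenstra prediction \eqref{eq:wide}, which the paper assumes rather than proves. So even granting your reduction, the conjecture would remain conditional for $n\geq 5$, and for $D>0$ the reduction itself does not go through as written.
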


One might even expect that this upper bound holds with $m=0$ (at least when $n$ is odd). 
  We shall give some evidence towards Conjecture \ref{con}, by relating it to standard expectations around the typical size of $n$-torsion in the class group of imaginary quadratic fields.

Suppose that we are given a monic polynomial $R\in \ZZ[x]$ of odd degree, and a prime $p$. Then, 
as is well-known, a quadratic polynomial $Q\in \ZZ[x]$ will share 
a common root in $\overline\FF_p$ with $R$ if and only if 
 $p\mid \Res(R,Q)$.  It follows that the 
  quadratic polynomials $Q\in \ZZ[x]$ that share no 
 common root with $R$ in 
  $\overline\FF_p$ for any prime $p$, are precisely the 
  quadratic polynomials $Q\in \ZZ[x]$ for which $\Res(R,Q)=\pm 1$.
On appealing to Theorem~\ref{t:1}, we may conclude as follows.

\begin{corollary}
Let
$R\in \ZZ[x]$ be monic and of odd degree. Then almost all quadratic polynomials 
$Q\in \ZZ[x]$ share  a common root with $R$ in $\overline\FF_p$, for some prime $p$.
\end{corollary}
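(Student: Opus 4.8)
The plan is to deduce the Corollary directly from Theorem~\ref{t:1} together with the elementary observation about resultants recorded just above the statement. The link between the two is the clean dichotomy: for a prime $p$ and polynomials $R, Q\in\ZZ[x]$, the reductions $\overline R, \overline Q\in\FF_p[x]$ have a common root in $\overline\FF_p$ if and only if $p\mid \Res(R,Q)$ (this uses that $R$ is monic, so the resultant does not degenerate upon reduction and a common factor over $\FF_p$ is detected). Hence a quadratic $Q\in\ZZ[x]$ fails to share a common root with $R$ in $\overline\FF_p$ for every prime $p$ precisely when $\Res(R,Q)$ is a nonzero integer divisible by no prime at all, i.e.\ when $\Res(R,Q)=\pm1$.

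First I would fix the monic $R\in\ZZ[x]$ of odd degree, say $\deg R = n$, and let $\cE_R(B)$ denote the set of triples $(a,b,c)\in\ZZ^3$ with $|a|,|b|,|c|\leq B$ such that $Q=ax^2+bx+c$ shares no common root with $R$ in $\overline\FF_p$ for any prime $p$. By the dichotomy above, $\cE_R(B)$ is contained in the set of triples with $\Res(R,ax^2+bx+c)=\pm1$; this latter set is in turn contained in the set counted by $N(B)$ in \eqref{eq:NB'}, since $R$ is itself an odd-degree polynomial over $\ZZ$. Therefore $\#\cE_R(B)\leq N(B)\ll B^3/\sqrt{\log B}$ by Theorem~\ref{t:1}. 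Since the total number of triples with $|a|,|b|,|c|\leq B$ is $(2B+1)^3 \asymp B^3$, the proportion of exceptional $Q$ is $O(1/\sqrt{\log B})\to 0$ as $B\to\infty$, which is exactly the assertion that almost all quadratic $Q\in\ZZ[x]$ (ordered by the height $\max(|a|,|b|,|c|)$) share a common root with $R$ in $\overline\FF_p$ for some prime $p$.

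There is essentially no obstacle here beyond making sure the resultant-reduction dictionary is applied correctly: the only subtlety is the monicity hypothesis on $R$, which guarantees that $\deg(\overline R)=\deg(R)$ over $\FF_p$ for every $p$, so that $p\mid\Res(R,Q)$ genuinely corresponds to a common root in $\overline\FF_p$ rather than to a spurious drop in degree. I would state this reduction step as a one-line lemma (or simply cite the standard fact, as the excerpt already does in the paragraph preceding the Corollary) and then the deduction from Theorem~\ref{t:1} is immediate. It is worth remarking that the $\pm1$ possibility for the resultant is automatically covered: $\Res(R,Q)$ is always a nonzero integer when $Q$ is coprime to $R$ over $\QQ$, and if $Q$ shares a factor with $R$ over $\QQ$ then $\Res(R,Q)=0$, which is divisible by every prime, so such $Q$ certainly do share common roots mod $p$ and are not exceptional.
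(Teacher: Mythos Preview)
Your proposal is correct and follows essentially the same route as the paper: the paragraph immediately preceding the Corollary uses the dichotomy $p\mid\Res(R,Q)\Leftrightarrow$ common root in $\overline\FF_p$ (valid because $R$ is monic), identifies the exceptional $Q$ as those with $\Res(R,Q)=\pm 1$, and then invokes Theorem~\ref{t:1}. Your added remarks on why monicity is needed and on the degenerate case $\Res(R,Q)=0$ are accurate and only make the deduction more explicit.
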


Elements of the 
class group of a quadratic number field $K/\QQ$ of discriminant $D_K$ are in bijection with equivalence classes 
of binary quadratic forms of discriminant $D_K$. This can be used to 
handle the average size of the class number $h(K)$, if one orders the quadratic number fields by discriminant. Thus it follows from Dirichlet's class number formula that 
\begin{equation}\label{eq:class}
\sum_{
\substack{K/\QQ \text{ quadratic}\\
|D_K| < X
}}  h(K)\ll   X^{\frac{3}{2}}.
\end{equation}
It is much more challenging  to assess the typical  size of 
$h_\text{odd}(K)$, which is defined to be the odd part of the class number, or even that of  
$h_n(K)$, for fixed $n$, which is defined to be size of the $n$-torsion subgroup of the  class group. 
In his thesis work \cite{aaron}, Landesman provided
a new geometric description of the $n$-torsion elements of the class groups of quadratic number fields, for a fixed $n$. Thus it follows from 
\cite[Thm.~1.1]{aaron} that a binary quadratic form $q$ corresponds to an odd order 
 element in the class group 
 if and only if there exists an odd integer $n$ 
and a degree $n$ polynomial whose resultant with $q$ is $\pm 1$. Combining this with the proof of 
Theorem \ref{t:1}, we will deduce the following result in Sections \ref{sec:disc1} and \ref{sec:disc2}.

\begin{corollary}\label{cor:1.2}
We have 
$$
\sum_{
\substack{K/\QQ \text{ quadratic}\\
|D_K|<X
}}  h_\text{odd}(K)\ll \frac{X^{\frac{3}{2}}}{\sqrt{\log X}}.
$$
\end{corollary}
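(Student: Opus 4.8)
The plan is to combine Landesman's characterization of odd-order class group elements with the large-sieve machinery behind Theorem~\ref{t:1}, applied uniformly over the relevant family of quadratic forms. First I would use \cite[Thm.~1.1]{aaron} to rewrite $h_{\mathrm{odd}}(K)$ as a count of $\SL_2(\ZZ)$-equivalence classes of primitive integral binary quadratic forms $q$ of discriminant $D_K$ for which there exists an odd integer $n$ and a degree-$n$ polynomial $R\in\ZZ[x]$ with $\Res(R,q)=\pm1$. Summing over $|D_K|<X$, the double sum $\sum_{|D_K|<X} h_{\mathrm{odd}}(K)$ becomes a count of equivalence classes of such ``resultant-soluble'' forms with $|\disc(q)|<X$. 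Since every class of discriminant $D$ with $|D|<X$ contains a representative $ax^2+bxy+cy^2$ with $|b|\le|a|\le\sqrt{|D|/3}$ and hence $|a|,|b|,|c|\ll\sqrt X$ (reduction theory for indefinite and definite forms), it suffices to bound the number of \emph{triples} $(a,b,c)\in\ZZ^3$ with $|a|,|b|,|c|\ll\sqrt X$ that are resultant-soluble; this is exactly $N(B)$ with $B\asymp\sqrt X$, up to the cost of overcounting by the number of reduced representatives per class (which is $O(1)$ in the definite case and must be handled with a bit more care, e.g.\ via a fundamental domain, in the indefinite case).

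Next I would invoke Theorem~\ref{t:1}, which gives $N(B)\ll B^3/\sqrt{\log B}$. Taking $B\asymp\sqrt X$ yields $N(B)\ll (X^{1/2})^3/\sqrt{\log X^{1/2}}\ll X^{3/2}/\sqrt{\log X}$, which is the desired bound. The only subtlety is the passage from counting classes to counting triples: I would note that a single equivalence class of discriminant $D$ with $0<|D|<X$ may have many reduced (or fundamental-domain) representatives $(a,b,c)$, but in the definite case each class has exactly one reduced form, and in the indefinite case the number of reduced forms in a class is $O(1+\log\epsilon_D)$ where $\epsilon_D$ is the fundamental unit; however, since we are summing rather than requiring a per-class injection, it is cleanest to go the other way and \emph{underestimate}: instead pick, for each class, one fixed representative with small coefficients, so that distinct classes give distinct triples and resultant-solubility of the class implies resultant-solubility of that triple. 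This gives $\sum_{|D_K|<X}h_{\mathrm{odd}}(K)\le N(C\sqrt X)$ for a suitable constant $C$, and Theorem~\ref{t:1} closes the argument.

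The main obstacle I anticipate is the bookkeeping around discriminants versus fundamental discriminants and the distinction between $\SL_2(\ZZ)$-classes of forms and ideal classes: Landesman's theorem is stated for a fixed $n$ and one must be careful that ``odd order'' corresponds to solubility for \emph{some} odd $n$, so that the resultant condition in \eqref{eq:NB'} (``for some $R\in\ZZ[x]$ of odd degree'') matches exactly. I would therefore want to verify in Section~\ref{sec:disc1} that summing Landesman's equivalence over all odd $n$ introduces no extra uniformity issues (the degree of $R$ is unbounded, but this is already permitted in the definition of $N(B)$), and in Section~\ref{sec:disc2} that the reduction-theory step correctly accounts for both signs of $D_K$ and for the factor relating $h(K)$ to the number of primitive forms. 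Modulo these standard but fiddly points, the proof is essentially: Landesman $\Rightarrow$ reduce to triples $\Rightarrow$ apply Theorem~\ref{t:1} with $B\asymp X^{1/2}$.
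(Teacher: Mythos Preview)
Your approach is correct for the positive-discriminant half and matches the paper almost verbatim: for an indefinite reduced form one indeed has $|a|,|b|,|c|<\sqrt{d}$, so $S^+(X)\leq N^+(\sqrt X)$ and Theorem~\ref{t:1} finishes.

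There is, however, a genuine gap in the negative-discriminant half. Your claim that a reduced definite form with $|D|<X$ has $|a|,|b|,|c|\ll\sqrt X$ is false: reduction gives $|b|\leq a\leq c$ and $3a^2\leq |D|$, hence $|b|,a\ll\sqrt X$, but only $ac\ll X$, so $c$ can be as large as $X$ (take the principal form with $a=1$). Thus ``pick one representative per class and feed it into $N(C\sqrt X)$'' does not work for $D_K<0$; the triples you produce may have $|c|$ up to $X$, and Theorem~\ref{t:1} as stated (a cube of side $B$) does not cover them.

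The paper fixes exactly this. In the proof of Theorem~\ref{t:1} it actually proves the lop-sided estimate
\[
T(B_1,B_2,B_3)\ll \frac{B_1B_2B_3}{\sqrt{\log\min\{B_1,B_2,B_3\}}},
\]
and then in Section~\ref{sec:disc1} it dyadically decomposes the reduced forms according to $a\sim A$, $|b|\sim B$, $c\sim C$ subject to $B\ll A\ll\sqrt X$ and $A\ll C\ll X/A$. A preliminary step disposes of the range $|b|\leq X^{1/2-1/10}$ trivially (the resultant condition is dropped and $ac\ll X$ already gives $O(X^{3/2-1/10}\log X)$), so that in the remaining range $\min\{A,B,C\}\gg X^{1/2-1/10}$ and the lop-sided sieve bound yields $ABC/\sqrt{\log X}$. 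Summing the dyadic pieces recovers $X^{3/2}/\sqrt{\log X}$. Your proposal is missing both the lop-sided input and this decomposition; without them the definite case does not go through.
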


It is also possible to arrive at this upper bound using Gauss' genus theory, which yields the lower  bound 
$h(K)=h_{\text{odd}}(K)h_{2^\infty}(K)\geq h_{\text{odd}}(K)2^{\omega(D_K)-1}$ for the class number. 
But then, on restricting to negative discriminants $D_K<-4$, it follows from 
Dirichlet's class number formula that 
\begin{equation}\label{eq:gerth}
\sum_{
\substack{K/\QQ \text{ quadratic}\\
-X<D_K<-4
}}  h_\text{odd}(K)\leq 
\frac{2}{\pi}
\sum_{
\substack{K/\QQ \text{ quadratic}\\
-X<D_K<-4
}}  \frac{\sqrt{|D_K|}L(1,\chi)}{2^{\omega(D_K)}},
\end{equation}
where 
 $L(s,\chi)$ is the Dirichlet $L$-function 
 with primitive real character $\chi(\cdot)=(\frac{D_K}{\cdot})$. 
 The sum on the right hand side can be estimated in a variety of ways, but recent work of Wilson \cite[Cor.~1.6]{wilson} yields an upper bound for 
\eqref{eq:gerth} that matches the one in Corollary~\ref{cor:1.2}.
For real quadratic fields $K/\QQ$ one can argue similarly, but this time taking the trivial lower bound $R_K\gg \log |D_K|$ for the regulator of $K$ in Dirichlet's class number formula.
In this way it is possible to obtain the improvement 
$$
\sum_{
\substack{K/\QQ \text{ quadratic}\\
0<D_K<X
}}  h_\text{odd}(K)
\ll 
\sum_{
\substack{K/\QQ \text{ quadratic}\\
0<D_K<X
}}  \frac{\sqrt{D_K}L(1,\chi)}{(\log D_K) 2^{\omega(D_K)}}
\ll\frac{X^{3/2}}{(\log X)^{\frac{3}{2}}}.
$$
 Gerth's  \cite{gerth1,gerth2} extension of the Cohen--Lenstra heuristics to the $2$-part of the class group implies that 
the upper bound in 
Corollary \ref{cor:1.2} is tight.

For a fixed positive integer $n$, 
there are many results in the literature 
that provide 
upper bounds for the average of $h_n(K)$
that save a power of $X$, rather than a power of $\log X$. 
After significant contributions by 
Heath-Brown--Pierce \cite{hb-p}, by 
Ellenberg--Pierce--Wood 
\cite{pierce2}, and 
by Frei--Widmer
\cite{frei},
the best general bound is
currently 
$$
\sum_{
\substack{K/\QQ \text{ quadratic}\\
|D_K|<X
}}  h_n(K)\ll X^{\frac{3}{2}
-\frac{1}{n+1}+\ve},
$$
for any $\ve>0$, and is 
due to  Koymans--Thorner \cite{koy}.
Note that any argument that yields a power-saving over the trivial  
bound for  $N_n(B)$
ought to yield a corresponding power-saving for the average of $h_n(K)$.
In Section \ref{s:3} we shall discuss some of the  issues involved when 
$n=3$. 

As a special case of the Cohen--Lenstra heuristics \cite{CL},
it is widely believed that 
\begin{equation}\label{eq:wide}
\sum_{
\substack{K/\QQ \text{ quadratic}\\
|D_K|<X
}}  h_n(K)\ll 
\begin{cases}
\hfil X&\text{if $n$ is odd},\\
\hfil X\log X&\text{if $n$ is even}.
\end{cases}
\end{equation}
It turns out that by using Landesman's correspondence, we can 
apply this bound to deduce an upper bound for the variant $N_n^-(B)$ of $N_n(B)$ in \eqref{eq:NB}, in which a restriction to quadratics with 
negative   discriminants is imposed. 
We shall prove the following result in Section \ref{sec:disc1}, the second part of which gives strong evidence towards Conjecture~\ref{con}.

\begin{theorem}\label{t:lama}
 \begin{enumerate}
\item 
We have $N_n^-(B)\ll_\ve 
B^{3-\frac{2}{n+1}+\ve}$,
for any $\ve>0$.
\item
Assume that \eqref{eq:wide} holds. Then $N_n^-(B)\ll B^2(\log B)^2$. 
\end{enumerate}
\end{theorem}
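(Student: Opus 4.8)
The plan is to exploit Landesman's correspondence to convert a statement about the existence of an odd-degree polynomial with resultant $\pm 1$ against a quadratic $Q$ into a statement about torsion in class groups. First I would set up the dictionary carefully. A triple $(a,b,c)$ counted by $N_n^-(B)$ gives a binary quadratic form $q(x,y)=ax^2+bxy+cy^2$ of negative discriminant $\Delta = b^2-4ac$, and $\Res(R, ax^2+bx+c)=\pm 1$ for some $R\in\ZZ[x]$ of degree $n$. By \cite[Thm.~1.1]{aaron}, the existence of such an $R$ forces $q$ to correspond to an $n$-torsion element of the class group of the order of discriminant $\Delta$ (when $n$ is odd; for general $n$ one passes to the relevant torsion subgroup). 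I would first reduce to primitive forms and to fundamental discriminants, absorbing the contribution of imprimitive forms and non-maximal orders into an acceptable error term: a non-primitive form with bounded coefficients has its content dividing $\gcd(a,b,c)$, and summing over contents $d$ the remaining count is $\ll B^3/d^{?}$ which converges, while non-maximal orders are controlled by a conductor $f$ with $f^2 \mid \Delta$, again summable. One must also be slightly careful that $q$ need only be $\GL_2(\ZZ)$-equivalent to such an $(a,b,c)$ with bounded coefficients, but in the negative-discriminant case every class has a reduced representative with $|a|,|b|\le |c|$ and $|\Delta|\ll c^2$, so a form with all coefficients $\le B$ has $|\Delta|\ll B^2$; conversely each class with $|\Delta|\ll B^2$ contributes only $O(1)$ (or $O(B^\varepsilon)$) reduced-type triples after accounting for the $(a,b,c)\mapsto (a, -b, c)$ ambiguity and scaling — this bookkeeping is the kind of routine estimate I would not grind through here.

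With this reduction in place, I would bound $N_n^-(B)$ by the number of pairs $(\Delta, [q])$ where $\Delta < 0$, $|\Delta| \ll B^2$, and $[q]$ is an $n$-torsion class in the class group of discriminant $\Delta$, weighted by an $O(B^\varepsilon)$ multiplicity. Summing over $\Delta$, this is $\ll B^\varepsilon \sum_{|D_K| \ll B^2} h_n(K)$, up to the conductor/primitivity errors already discussed. Now for part (1) I would invoke the Koymans–Thorner bound $\sum_{|D_K|<X} h_n(K) \ll X^{3/2 - 1/(n+1)+\varepsilon}$ with $X \asymp B^2$, giving $N_n^-(B) \ll B^{2(3/2 - 1/(n+1)) + \varepsilon} = B^{3 - 2/(n+1)+\varepsilon}$, as claimed. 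For part (2), assuming \eqref{eq:wide}, I would use $\sum_{|D_K|<X} h_n(K) \ll X\log X$ (the even-$n$ case covers all $n$), again with $X\asymp B^2$, yielding $N_n^-(B) \ll B^2 \log(B^2) \cdot B^\varepsilon$. To sharpen the $B^\varepsilon$ to a clean $(\log B)^2$ I would want to replace the crude $O(B^\varepsilon)$ multiplicity bound by a genuine count of how many bounded triples $(a,b,c)$ lie in a single $\GL_2(\ZZ)$-class of given discriminant; for reduced forms this multiplicity is genuinely $O(1)$, and the number of discriminants $\Delta$ with $|\Delta|\ll B^2$ admitting a given class is handled within the $X\log X$ already, so one extra logarithm comes from the divisor-type sum over which $(a,b,c)$ reduce to a fixed form, giving the stated $B^2(\log B)^2$.

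The main obstacle I anticipate is the passage from "bounded coefficients $(a,b,c)$" to "class of bounded discriminant" without losing more than a logarithmic factor — precisely, controlling the fibers of the map $(a,b,c)\mapsto ([q],\Delta)$. In the positive-discriminant case this map has huge fibers (an indefinite form has infinitely many integer representatives of comparable size, reflecting the regulator), which is exactly why the restriction to negative discriminants in $N_n^-(B)$ is essential and why one cannot yet prove the analogue for $N_n(B)$; in the negative-discriminant case the fibers are finite but one still needs that a form with coefficients up to $B$ is $\GL_2(\ZZ)$-equivalent to a bounded multiple of a reduced form, and to count the number of $(a,b,c)$ with $\max(|a|,|b|,|c|)\le B$ equivalent to a fixed reduced $q_0$ of discriminant $\Delta$ — this last count is $O(d(\Delta)^{O(1)})$ or so, and summing it against $h_n$ is where the second logarithm in part (2) is generated. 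A secondary technical point is making sure Landesman's correspondence applies uniformly as the discriminant varies (it is stated for a fixed number field, so one needs the version parametrized by $\Delta$, together with the fact that resultant $\pm 1$ against $Q$ is equivalent to the image condition in \cite[Thm.~1.1]{aaron}); I would cite the relevant uniform statement and, if \cite{aaron} only provides it over a fixed base, note that the argument is Galois-descent and works verbatim over $\ZZ$.
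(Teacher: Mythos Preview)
Your overall strategy---reduce via Landesman's correspondence to counting $n$-torsion in imaginary quadratic class groups, then feed in Koymans--Thorner or the conjectural bound \eqref{eq:wide}---is exactly what the paper does. The gap is in your fibre count. You assert that the number of triples $(a,b,c)$ with $\max(|a|,|b|,|c|)\le B$ lying in a fixed $\SL_2(\ZZ)$-class of fixed discriminant $\Delta$ is $O(B^\varepsilon)$, later sharpened to $O(1)$ or $O(d(\Delta)^{O(1)})$. This is false when $|\Delta|$ is small relative to $B^2$: already for $\Delta=-4$ and the unique class of $x^2+y^2$, the forms $q_0\circ\gamma$ with $\gamma\in\SL_2(\ZZ)$ and all three coefficients bounded by $B$ number $\asymp B$, not $O(B^\varepsilon)$. (The outer coefficients are $\alpha^2+\gamma^2$ and $\beta^2+\delta^2$; counting lattice points in the disc of radius $\sqrt{B}$ and then imposing $\alpha\delta-\beta\gamma=1$ leaves order $B$ choices.) Consequently your claimed inequality $N_n^-(B)\ll B^\varepsilon\sum_{|D_K|\ll B^2}h_n(K)$ is not established, and the attempted refinement of the $B^\varepsilon$ to a divisor-type logarithm has no basis.

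The paper's remedy is to pass directly to the \emph{fundamental} discriminant $D$ and bound the fibre over each pair $(D,[f])$ by $O(B^2/|D|)$. One writes $q=f\circ\gamma$ with $f$ reduced of discriminant $D$ and $\det\gamma=r$ (via \cite[Prop.~7.1]{Buell}), observes that the columns of $\gamma$ satisfy $|f(\alpha,\gamma)|=|a|\le B$ and $|f(\beta,\delta)|=|c|\le B$, and counts each column by the number of lattice points in the ellipse $|f(x,y)|\le B$, which is $O(B/\sqrt{|D|})$ since $f$ is positive definite of discriminant $D$. This gives
\[
N_n^-(B)\ll B^2\sum_{2^j\ll B^2}\frac{S_n^-(2^j)}{2^j},
\qquad
S_n^-(X)=\sum_{\substack{K/\QQ\text{ imag.\ quad.}\\ |D_K|<X}}h_n(K),
\]
after a dyadic decomposition in $|D|$. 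Inserting Koymans--Thorner recovers Part~(1) with the same exponent you wrote down; your incorrect multiplicity happened to be harmless there because the sum over $D$ is dominated by the top dyadic range $|D|\asymp B^2$, where the true weight $B^2/|D|$ really is $O(1)$. For Part~(2), inserting $S_n^-(2^j)\ll 2^j\cdot j$ yields $N_n^-(B)\ll B^2\sum_{j\ll\log B}j\ll B^2(\log B)^2$: the second logarithm comes from the $O(\log B)$ dyadic ranges each contributing $O(\log B)$, not from a divisor sum as you suggest.
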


Unfortunately it seems difficult  to establish similar  upper bounds for the counting function $N_n(B)$, since for quadratics with positive discriminant we are required to 
estimate the number of  distinct integers with absolute value up to $x$ that are represented by a binary quadratic form with positive discriminant close to $x^2$, and such a bound would inevitably depend on the size of the fundamental unit.

\subsection*{Acknowledgements}
The  authors are very grateful to Aaron Landesman  
for drawing their attention to this problem and for sharing some useful remarks.
Special thanks are due to Fabian Gundlach for pointing out an error in an earlier version of this paper and for sharing helpful comments. Thanks are also due to 
Christopher Frei for useful feedback. 
While working on this paper
the first author was supported  by 
a FWF grant (DOI 10.55776/P36278).

\section{Resultants}

Let $Q(x)=ax^2+bx+c$ be a quadratic polynomial with $a,b,c\in \Z$, and let 
$R(x)=r_0x^n+r_1x^{n-1}+\dots+r_n$ be a polynomial of degree $n$ also defined over $\Z$.
The resultant of $R$ and $Q$ is defined to be 
the determinant of the $(n+2)\times (n+2)$ matrix $M$, where
$$
M=
\begin{pmatrix}
r_0 & 0& a& 0& \cdots  & 0 \\
r_1& r_0 &b& a&  \cdots& 0    \\
r_2& r_1&c& b& \cdots  & 0\\
r_3 &r_2&0& c& \cdots  & 0 \\
\vdots&\vdots& \vdots&\vdots & \ddots  & \vdots  \\
0& r_n&  0&  0& \cdots & c
\end{pmatrix}.
$$
In this section our main goal is to prove a formula for 
$\Res(R,Q)=\det M$ which is tailored to the problem of estimating $N(B)$ in \eqref{eq:NB}.
For any positive integer $k$, let $A_k$ be the $(k+2)\times k$ matrix
\begin{equation}\label{eq:Ak}
A_k=
\begin{pmatrix}
 a& 0& \cdots  & 0 \\
b& a&  \cdots& 0     \\
c& b& \cdots  & 0\\
0& c& \cdots  & a \\
 \vdots&\vdots & \ddots  & b  \\
  0&  0&\cdots & c
\end{pmatrix}.\end{equation}
In what follows, we shall denote by $B_{k,i,j}$  the $k\times k$ matrix formed by removing the $i$-th and $j$-th row from $A_{k}$, for any $1\leq i\leq k+2$ and $1\leq j\leq k$.
With the notation \eqref{eq:Ak}, if 
 $\mathbf{c}=(r_0,\dots, r_n)$, then the first two columns of $M$ 
are given by $(\mathbf{c},0)^T$ and $(0,\mathbf{c})^T$, 
 respectively, 
and the remaining $n$ columns are given by  $A_n$.

We begin by using elementary row operations to clear all but two of  the entries from the first two columns of the matrix whose determinant is being taken. 

\begin{lemma}\label{lem:res1}
Let $1\leq i<j\leq n+1$. Then there exist $X,Y\in\Q[r_0,\dots, r_n,a,b,c]$ depending on the choice of $i,j$ such that 
$$
\Res(R,Q)=
\det
\begin{pmatrix}
\vdots & \vdots & &&&& \\
X     & 0     & &&&&\\
0     & X     & &&&&\\
\vdots & \vdots &&&A_n&& \\
Y     & 0     &  &&&&\\ 
0     & Y     &     &&&&\\
\vdots & \vdots & &&&&
\end{pmatrix},
$$
where $X$ appears in the $(i,1)$-entry and $(i+1,2)$-entry of the matrix,  and $Y$ appears in the $(j,1)$-entry and $(j+1,2)$-entry.
\end{lemma}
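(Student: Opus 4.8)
The plan is to keep working with $\det M=\Res(R,Q)$ and to use only elementary \emph{column} operations on $M$: one adds $\Q(a,b,c,r_0,\dots,r_n)$-linear combinations of the columns of $A_n$ (columns $3,\dots,n+2$) to columns $1$ and $2$. Such operations change neither $\det M$ nor columns $3,\dots,n+2$, so after they are carried out the last $n$ columns are still exactly $A_n$, and only columns $1$ and $2$ must be reshaped. (This is how one realises the ``row operations'' mentioned above; equivalently one may do row operations on $M^{\top}$, whose determinant agrees.) It is cleanest to pass to polynomials: identify $\Q(a,b,c,r_0,\dots,r_n)^{\,n+2}$ with the space of polynomials of degree $\le n+1$ via $e_\ell\mapsto t^{\ell-1}$. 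Then column $1$ of $M$ becomes $\widetilde R(t):=r_0+r_1t+\dots+r_nt^{\,n}$, column $2$ becomes $t\,\widetilde R(t)$, the $m$-th column of $A_n$ becomes $t^{\,m-1}\widetilde Q(t)$ with $\widetilde Q(t):=a+bt+ct^2$, and adding a combination of the columns of $A_n$ to a column amounts to adding $\widetilde Q(t)\,g(t)$ with $\deg g\le n-1$.

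The first step is column $1$: I would produce $g(t)$ with $\deg g\le n-2$ and scalars $X,Y$ with
$$
\widetilde R(t)=\widetilde Q(t)\,g(t)+X\,t^{\,i-1}+Y\,t^{\,j-1}.
$$
Matching the coefficients of $1,t,\dots,t^{\,n}$, this is a square linear system in the coefficients of $g$ and in $X,Y$; its coefficient matrix is $A_{n-1}$ with the standard basis columns $e_i,e_j$ adjoined, of determinant $\pm\det B_{n-1,i,j}$. It is solvable since $t^{\,i-1}$ and $t^{\,j-1}$ are linearly independent modulo $(\widetilde Q(t))$ — $\widetilde Q$ being an irreducible quadratic over $\Q(a,b,c,r_0,\dots,r_n)$ with a non-torsion ratio of roots — equivalently $\det B_{n-1,i,j}\ne0$. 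Subtracting from column $1$ the combination of columns $3,\dots,n+1$ of $M$ that corresponds to $\widetilde Q\,g$ replaces column $1$ by $X e_i+Y e_j$.

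For column $2$ one multiplies the displayed identity by $t$:
$$
t\,\widetilde R(t)=\widetilde Q(t)\,\bigl(t\,g(t)\bigr)+X\,t^{\,i}+Y\,t^{\,j},
$$
and since $\deg(tg)\le n-1$ the term $\widetilde Q\cdot(tg)$ is a combination of columns $4,\dots,n+2$ of $M$; subtracting it replaces column $2$ by $X e_{i+1}+Y e_{j+1}$, with the \emph{same} $X,Y$. Since $e_i,e_{i+1},e_j,e_{j+1}$ carry their nonzero entry in rows $i,i+1,j,j+1$, the resulting matrix is exactly the one in the statement, and its determinant is still $\Res(R,Q)$.

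The one thing to watch is the degree constraint $\deg g\le n-2$: it is automatic, since the coefficient of $t^{\,n+1}$ in $\widetilde Q g$ must vanish (as $\widetilde R$ has degree $\le n$ and $j-1\le n$), and it is exactly what makes $\widetilde Q g$ and $\widetilde Q(tg)$ honest combinations of the $n$ columns of $A_n$. Beyond that, the only genuine input is solvability of the linear system, i.e.\ $\det B_{n-1,i,j}\ne0$; I expect this to be the only step needing an argument, and it follows from the injectivity of $g\mapsto\widetilde Q g$ modulo $\operatorname{span}(t^{\,i-1},t^{\,j-1})$ over the generic field. (Unwinding the solution, $X$ and $Y$ come out as polynomials in $r_0,\dots,r_n$ with coefficients in $\Q(a,b,c)$ whose denominators divide $\det B_{n-1,i,j}$ — which explains why the minors $B_{k,i,j}$ are singled out.)
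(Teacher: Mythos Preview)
Your approach is essentially the paper's: both perform column operations on $M$ using linear combinations of the columns of $A_n$, reduce to solving the square system with coefficient matrix (equivalent to) $B_{n-1,i,j}$, and then handle column $2$ by the same $\boldsymbol{\lambda}$ shifted by one slot---exactly your ``multiply by $t$'' step. Your polynomial packaging is a clean reformulation that makes the shift transparent and, as a bonus, supplies a conceptual reason for $\det B_{n-1,i,j}\neq 0$ (linear independence of $t^{i-1},t^{j-1}$ modulo $\widetilde Q$), a point the paper passes over in silence.
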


\begin{proof}
Let $i,j$ be fixed, with  $1\leq i<j\leq n+1$.
Let $\mathbf{c}$ be the column vector $(r_0,\dots, r_n)^T$
and 
let $\mathbf{c}_{i,j}$ denote the column vector formed by removing the $i$-th and  $j$-th entries from $\mathbf{c}$. 
 Recalling the definition \eqref{eq:Ak} of $A_k$, and the subsequent definition of 
 $B_{k,i,j}$, 
we claim that we  can find a linear combination over $\Q$ of the  columns of $A_n$ to 
clear any $n-1$ of the $n+1$ non-trivial entries of the first column. 
Indeed, we begin by solving 
\[B_{n-1,i,j}\boldsymbol{\lambda}=\mathbf{c}_{i,j},\]
for a vector $\boldsymbol{\lambda}=(\lambda_1,\dots,\lambda_{n-1})^T\in\Q^{n-1}$.
We then proceed to perform  the column operation which subtracts $A_n
(\lambda_1,\dots,\lambda_{n-1},0)^T$
from the first column of $M$, or 
$A_n
(0,\lambda_1,\dots,\lambda_{n-1})^T$ from the second
	column of $M$.
The $n+2$ rows of $A_n$ are given by 
\begin{align*}
\mathbf{a}_1&=(a,0,\dots,0),  &\mathbf{a}_2=(b,a,\dots,0),\\
\mathbf{a}_{n+1}&=(0,\dots ,0,c,b),
&\mathbf{a}_{n+2}=(0,\dots ,0,c), 
\end{align*}
and 
$$
\mathbf{a}_{k}=(0,\dots,0,c,b,a,0,\dots,0), 
$$
for $3\leq k\leq n$, where $c,b,a$ occur in the $k-2,k-1,k$-th entry, respectively.
In conclusion, the procedure we have described produces $X,Y\in\Q[r_0,\dots, r_n,a,b,c]$, given by
\begin{align*}
X&=r_{i-1}-\mathbf{a}_{i}\begin{pmatrix}\boldsymbol{\lambda}\\ 0\end{pmatrix}
=r_{i-1}-c\lambda_{i-2}\mathbf{1}_{i\geq 3}-b\lambda_{i-1}\mathbf{1}_{i\geq 2}-a\lambda_{i}\mathbf{1}_{i\leq n-1},\\
Y&=r_{j-1}-\mathbf{a}_{j}\begin{pmatrix}\boldsymbol{\lambda}\\ 0\end{pmatrix}=r_{j-1}-c\lambda_{j-2}\mathbf{1}_{j\geq 3}-b\lambda_{j-1}\mathbf{1}_{j\leq n}
-a\lambda_j\mathbf{1}_{j\leq n-1},
\end{align*}
 such that the statement of the lemma holds. 
\end{proof}

Using this expression for the resultant  we are now in a position to compute the determinant.

\begin{lemma}\label{lem:res2}
Let $1\leq i<j\leq n+1$.
Then there exist $X,Y\in\Q[r_0,\dots, r_n,a,b,c]$ depending on the choice of $i,j$ such that 
\[
\Res(R,Q)
=a^{i-1}c^{n-j+1}\left(c^{j-i}X^2+(-1)^{j-i}(d_{j-i}-acd_{j-i-2})XY+a^{j-i}Y^2\right),
\]
where 
$d_{-1}=0$,  $d_0=1$, and $d_k$ is the $k\times k$ determinant 
\[d_k=\det
\begin{pmatrix}
b & a & 0& \cdots  & &  & \\
c & b& a&  \cdots & &  0&\\
0 & c& b& \cdots &  &  & \\
\vdots & \vdots &\vdots & \ddots  &\vdots &\vdots &\vdots \\ 
   &    &  & \dots& b& a&  0\\
   & 0  &   &\dots& c& b&  a\\
   &    &  &\dots & 0& c&  b
\end{pmatrix},
\]
for $k\in \NN$.
\end{lemma}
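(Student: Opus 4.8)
The plan is to take the determinantal expression for $\Res(R,Q)$ produced by Lemma~\ref{lem:res1} and evaluate it by a Laplace expansion along the first two columns. Write $M'$ for the matrix in Lemma~\ref{lem:res1}: its columns $3,\dots,n+2$ are the columns of $A_n$, while its first two columns are supported only on the entries $(i,1)$ and $(i+1,2)$, both equal to $X$, and $(j,1)$ and $(j+1,2)$, both equal to $Y$. Expanding $\det M'$ along columns $1$ and $2$, a pair of rows $\{p,q\}$ with $p<q$ contributes a nonzero $2\times2$ minor only when $\{p,q\}$ is one of $\{i,i+1\}$, $\{j,j+1\}$, $\{i,j+1\}$, $\{i+1,j\}$, and in those cases the minor equals $X^2$, $Y^2$, $XY$, $-XY$ respectively; tracking the Laplace sign $(-1)^{p+q+1}$ one is led to
\[
\Res(R,Q)=X^2\det C_{i,i+1}+Y^2\det C_{j,j+1}+(-1)^{i+j}XY\bigl(\det C_{i,j+1}-\det C_{i+1,j}\bigr),
\]
where $C_{p,q}$ denotes the $n\times n$ matrix obtained from $A_n$ by deleting its $p$-th and $q$-th rows, and the last term is simply absent when $j=i+1$.

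The heart of the proof is then the single determinant identity
\[
\det C_{p,q}=a^{p-1}\,c^{\,n-q+2}\,d_{q-p-1}\qquad(1\le p<q\le n+2).
\]
To establish it I would use that row $r$ of $A_n$ is supported on columns $r-2,r-1,r$. Consequently the first $p-1$ rows of $C_{p,q}$ are supported on its first $p-1$ columns, where they form a lower-triangular matrix with $a$'s on the diagonal; splitting off this block yields the factor $a^{p-1}$ and leaves the submatrix on rows $\{p+1,\dots,n+2\}\setminus\{q\}$ and columns $\{p,\dots,n\}$. Dually, rows $q+1,\dots,n+2$ of that submatrix are supported on columns $q-1,\dots,n$, on which they form an upper-triangular matrix with $c$'s on the diagonal, contributing $c^{\,n-q+2}$. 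What is left is the square block on rows $p+1,\dots,q-1$ and columns $p,\dots,q-2$, which after reindexing is exactly the tridiagonal matrix with $b$ on the diagonal, $a$ above and $c$ below that defines $d_{q-p-1}$ (the empty $0\times0$ block corresponding to $d_0=1$).

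Feeding the values $\det C_{i,i+1}=a^{i-1}c^{\,n-i+1}$, $\det C_{j,j+1}=a^{j-1}c^{\,n-j+1}$, $\det C_{i,j+1}=a^{i-1}c^{\,n-j+1}d_{j-i}$ and $\det C_{i+1,j}=a^{i}c^{\,n-j+2}d_{j-i-2}$ into the displayed formula, pulling out the common factor $a^{i-1}c^{\,n-j+1}$ and using $(-1)^{i+j}=(-1)^{j-i}$, one arrives at
\[
\Res(R,Q)=a^{i-1}c^{\,n-j+1}\Bigl(c^{\,j-i}X^2+(-1)^{j-i}\bigl(d_{j-i}-acd_{j-i-2}\bigr)XY+a^{\,j-i}Y^2\Bigr);
\]
the convention $d_{-1}=0$ makes this correct also when $j=i+1$, where the term from $C_{i+1,j}$ was genuinely absent.

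I expect the delicate points to be entirely bookkeeping rather than conceptual: correctly enumerating which $2\times2$ minors survive and with what value (in particular the degenerate overlap of row indices when $j=i+1$), pinning down the Laplace signs, and verifying that the two triangular corners of $C_{p,q}$ really do split off so that the continuant block is isolated. Checking the identity $\det C_{p,q}=a^{p-1}c^{\,n-q+2}d_{q-p-1}$ on a couple of small matrices would be a sensible safeguard before writing out the general argument.
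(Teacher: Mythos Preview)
Your proposal is correct and follows essentially the same strategy as the paper: Laplace expansion along the first two columns to reduce to the minors $\det B_{n,p,q}$ (your $C_{p,q}$), followed by exploiting the banded structure of $A_n$ to evaluate these as $a^{p-1}c^{\,n-q+2}d_{q-p-1}$. The only differences are organisational---the paper expands column by column in two steps rather than using the two-column Laplace expansion at once, and it peels off rows of $B_{n,i,j+1}$ recursively rather than invoking the block-triangular structure directly---but the underlying ideas are identical.
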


\begin{proof}
Recall the definition \eqref{eq:Ak} of $A_k$, 
and the subsequent definition of $B_{k,i,j}$.
(In particular $d_k=\det B_{k,1,k+2}$ in this notation, for $k\geq 1$.)
 Let $i,j$ be fixed such that  $1\leq i<j\leq n+1$. Then 
it follows from Lemma \ref{lem:res1} that 
\[
\Res(R,Q)=(-1)^{i+1}X\det M_{i,1}+(-1)^{j+1}Y\det M_{j,1},
\]
where $M_{k,1}$ is the $(n+1)\times(n+1)$ matrix by removing the $k$-th row and the first column.
Now the matrix $M_{i,1}$ has an $X$ in its $(i,1)$-entry and a $Y$ in its $(j,1)$-entry, so
\[\det M_{i,1}=(-1)^{i+1}X\det B_{n,i,i+1}+(-1)^{j+1}Y\det B_{n,i,j+1}.\]
Similarly, 
the matrix $M_{j,1}$ has an $X$ in its $(i+1,1)$-entry if $j>i+1$, and a $Y$ in its $(j,1)$-entry, so
\[\det M_{j,1}=(-1)^{i+2}X\mathbf{1}_{j> i+1}\det B_{n,i+1,j}+(-1)^{j+1} Y\det B_{n,j,j+1}.\]
Putting everything together, we obtain
\begin{equation}\label{eq:Res1}
\begin{split}
\Res(R,Q)
=~&
X^2\det B_{n,i,i+1}
+Y^2 \det B_{n,j,j+1}\\
&+(-1)^{j-i}\left(\det B_{n,i,j+1}-\mathbf{1}_{j> i+1}\det B_{n,i+1,j}\right)XY.
\end{split}
\end{equation}

It remains to compute $\det B_{n,i,j+1}$ for 
 integers $1\leq i\leq j\leq n+1$. 
If $i>1$,  Laplace expansion along the first row of $B_{n,i,j+1}$ gives
\[ \det B_{n,i,j+1}=a\det B_{n-1,i-1,j}.\]
If $j\leq n$, Laplace expansion along the final row of $B_{n,i,j+1}$ gives
\[ \det B_{n,i,j+1}=c\det B_{n-1,i,j+1}.\]
Applying these relations recursively, we easily obtain
\[ \det B_{n,i,j+1}
=a^{i-1}\det B_{n-i+1,1,j-i+2}
=a^{i-1}c^{n-j+1}
\det B_{j-i,1,j-i+2}.
\]
Recalling the definition of $d_k$ from the statement, 
we see that 
\[\det B_{n,i,j+1}=a^{i-1}c^{n-j+1}d_{j-i},
\]
which  easily leads to the statement of the lemma, on insertion into  \eqref{eq:Res1}.
\end{proof}

Each $d_k$ is a homogeneous polynomial of degree $k$ in $a,b,c$ and can be evaluated as follows.

\begin{lemma}\label{lem:res3}
Assume that $b^2-4ac\neq 0$. 
Then for any  $k\in \NN$, we have
\[
d_k=\frac{1}{\sqrt{b^2-4ac}}\left(\left(\frac{b+\sqrt{b^2-4ac}}{2}\right)^{k+1}-\left(\frac{b-\sqrt{b^2-4ac}}{2}\right)^{k+1}\right).
\]
Furthermore, for $k\geq 2$, we have
\[d_k-acd_{k-2}=\left(\frac{b+\sqrt{b^2-4ac}}{2}\right)^{k}+\left(\frac{b-\sqrt{b^2-4ac}}{2}\right)^{k}.\]
\end{lemma}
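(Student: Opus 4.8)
The plan is to reduce everything to a second-order linear recurrence for the tridiagonal determinants $d_k$ and then solve that recurrence explicitly.

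First I would record the recurrence. Laplace expansion of $d_k$ along its first row produces a $b\,d_{k-1}$ term from the $(1,1)$-cofactor and an $a$-multiple of the minor obtained by deleting the first row and second column; expanding that minor along its first column (which is $(c,0,\dots,0)^{T}$) contributes a factor $c$ times $d_{k-2}$. Hence
\[
d_k = b\,d_{k-1} - ac\,d_{k-2} \qquad (k\geq 1),
\]
and one checks the small cases directly against $d_{-1}=0$, $d_0=1$, $d_1=b$, $d_2=b^2-ac$. This bookkeeping — getting the recurrence and its initial conditions exactly right — is really the only place where care is needed; everything after is a standard computation.

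Next I would solve the recurrence. Its characteristic polynomial is $t^2-bt+ac$, with roots
\[
t_\pm = \frac{b\pm\sqrt{b^2-4ac}}{2},
\]
and the hypothesis $b^2-4ac\neq 0$ guarantees $t_+\neq t_-$, so $d_k=\alpha\,t_+^{k}+\beta\,t_-^{k}$ for constants $\alpha,\beta$ independent of $k$. Imposing $d_0=1$ and $d_1=b=t_++t_-$ gives $\alpha=t_+/(t_+-t_-)$ and $\beta=-t_-/(t_+-t_-)$, whence
\[
d_k = \frac{t_+^{k+1}-t_-^{k+1}}{t_+-t_-}.
\]
Since $t_+-t_-=\sqrt{b^2-4ac}$, this is exactly the first claimed identity. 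Note that no separate discussion of the degenerate case $ac=0$ is required, because the derivation only uses $t_+\neq t_-$.

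Finally, for the second identity I would substitute the closed form, using $t_+t_-=ac$, and telescope: for $k\geq 2$,
\[
d_k - ac\,d_{k-2}
= \frac{t_+^{k+1}-t_-^{k+1} - t_+t_-\,(t_+^{k-1}-t_-^{k-1})}{t_+-t_-}
= \frac{(t_+-t_-)(t_+^{k}+t_-^{k})}{t_+-t_-}
= t_+^{k}+t_-^{k},
\]
which is the asserted expression. I expect no genuine obstacle here beyond the elementary algebra, the crux of the whole lemma being the verification of the recurrence $d_k=b\,d_{k-1}-ac\,d_{k-2}$.
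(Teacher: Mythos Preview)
Your proof is correct and follows exactly the approach indicated by the paper, which simply records that the lemma ``follows easily from the recurrence relation $d_k=bd_{k-1}-acd_{k-2}$, and the initial conditions $d_{1}=b$ and $d_2=b^2-ac$.'' You have merely (and correctly) supplied the routine details the paper omits: deriving the recurrence by cofactor expansion, solving via the characteristic roots $t_\pm$, and verifying the second identity using $t_+t_-=ac$.
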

\begin{proof}
This follows easily from the recurrence relation $d_k=bd_{k-1}-acd_{k-2}$, and the initial conditions $d_{1}=b$ and  $d_2=b^2-ac$.
\end{proof}

\section{The cubic case: a worked example}\label{s:3}

In this section we 
discuss the counting function $N_3(B)$ in \eqref{eq:NB},
which we shall use to 
illustrate the  calculations in the preceding section.
When $R\in \ZZ[x]$ is cubic we have
\begin{align*}
\Res(R,Q)
&=
\det
\begin{pmatrix}
r_0 & 0 & a& 0&  0    \\
r_1 & r_0& b& a& 0    \\
r_2 & r_1& c& b& a   \\
r_3 & r_2& 0& c& b  \\
0 & r_3& 0& 0&  c    \\
\end{pmatrix}\\
&=a^3 r_3^2
-a^2 b r_2 r_3-2 a^2 c r_1 r_3 + a^2 c r_2^2+
a b^2 r_1 r_3 + 3 a b c r_0 r_3 \\
&\quad - a b c r_1 r_2 - 2 a c^2 r_0 r_2 +
a c^2 r_1^2
-b^3 r_0 r_3 + b^2 c r_0 r_2-b c^2 r_0 r_1 + c^3 r_0^2.
\end{align*}
The points $(a,b,c,r_0,r_1,r_2,r_3)$ on the hypersurface
$\Res(R,Q)=-1$ are in bijection with the 
 points $(-a,-b,-c,r_0,r_1,r_2,r_3)$ on the hypersurface 
$\Res(R,Q)=1$, and so we may focus our attention on the latter equation. 
This 
 defines a quintic hypersurface in $\mathbb{A}^7=\mathbb{A}_{a,b,c}^3\times \mathbb{A}_{r_0,\dots,r_3}^4$ and has the structure of a quadric bundle over $\mathbb{A}_{a,b,c}^3$, the fibres of which are quadrics in  $\mathbb{A}_{r_0,\dots,r_3}^4$. In this way we can interpret $N_3(B)$ as counting integral points of height at most $B$ in $\mathbb{A}_{a,b,c}^3$ for which the fibre has 
 an integral point. However, it turns out that these quadrics are all degenerate and we are led to study 
the existence of integral points in families of 
 irreducible affine conics.

We can get explicit equations for the relevant conics by revisiting the calculations in 
Lemma \ref{lem:res1}.
The set 
$\{(1,2),(1,3),(1,4),(2,3),(2,4),(3,4)\}$  comprises the set of allowable indices $(i,j)$,
so that we always have  $j-i\in\{1,2,3\}$.
If $(i,j)=(1,2)$, then
\[
\Res(R,Q)=
\det\begin{pmatrix}
X_{1,2} & 0 & a& 0&  0    \\
Y_{1,2} & X_{1,2}& b& a& 0    \\
0 & Y_{1,2}& c& b& a   \\
0 & 0& 0& c& b  \\
0 & 0& 0& 0&  c    
\end{pmatrix}=c^2 (c X_{1,2}^2- b X_{1,2} Y_{1,2} +a Y_{1,2}^2  ),\]
where 
\[\begin{pmatrix}
X_{1,2}\\
Y_{1,2}\end{pmatrix}
=\begin{pmatrix}
r_0\\
r_1\end{pmatrix}
-\begin{pmatrix}
a & 0     \\
b & a 
\end{pmatrix}\begin{pmatrix}
\lambda_1    \\
\lambda_2   
\end{pmatrix}\quad\text{ and }\quad\begin{pmatrix}
c & b    \\
0 & c
\end{pmatrix}\begin{pmatrix}
\lambda_1    \\
\lambda_2   \\  
\end{pmatrix}=\begin{pmatrix}
r_2    \\
r_3   
\end{pmatrix}.\]
If $(i,j)=(1,3)$, then $\Res(R,Q)$ is 
\[\det\begin{pmatrix}
X_{1,3} & 0 & a& 0&  0    \\
0 & X_{1,3}& b& a& 0    \\
Y_{1,3} & 0& c& b& a   \\
0 & Y_{1,3}& 0& c& b  \\
0 & 0& 0& 0&  c    \\
\end{pmatrix}=c (c^2 X_{1,3}^2+(b^2- 2 a c) X_{1,3} Y_{1,3} +a^2 Y_{1,3}^2),\]
where 
\[\begin{pmatrix}
X_{1,3}\\
Y_{1,3}\end{pmatrix}
=\begin{pmatrix}
r_0\\
r_1\end{pmatrix}
-\begin{pmatrix}
a & 0   \\
c & b
\end{pmatrix}\begin{pmatrix}
\lambda_1    \\
\lambda_2  \end{pmatrix}\quad\text{ and }\quad\begin{pmatrix}
b & a     \\
0 & c 
\end{pmatrix}\begin{pmatrix}
\lambda_1    \\
\lambda_2  
\end{pmatrix}=\begin{pmatrix}
r_2    \\
r_3 
\end{pmatrix}.\]
If $(i,j)=(1,4)$, then $\Res(R,Q)$ is
\[\det\begin{pmatrix}
X_{1,4} & 0 & a& 0&  0    \\
0 & X_{1,4}& b& a& 0    \\
0 & 0& c& b& a   \\
Y_{1,4} & 0& 0& c& b  \\
0 & Y_{1,4}& 0& 0&  c    \\
\end{pmatrix}=c^3 X_{1,4}^2 - (b^3 - 3 a b c) X_{1,4} Y_{1,4} + a^3 Y_{1,4}^2,\]
where 
\[\begin{pmatrix}
X_{1,4}\\
Y_{1,4}\end{pmatrix}
=\begin{pmatrix}
r_0\\
r_1\end{pmatrix}
-\begin{pmatrix}
a & 0    \\
0 & c
\end{pmatrix}\begin{pmatrix}
\lambda_1    \\
\lambda_2   
\end{pmatrix}\quad\text{ and }\quad\begin{pmatrix}
b & a     \\
c & b
\end{pmatrix}\begin{pmatrix}
\lambda_1    \\
\lambda_2   
\end{pmatrix}=\begin{pmatrix}
r_2    \\
r_3  
\end{pmatrix}.\]
These calculations are all consistent with  Lemmas \ref{lem:res2} and \ref{lem:res3}.

When one makes  all these substitutions for $X_{i,j},Y_{i,j}$ one see that 
$N_3(B)$ is bounded by the number of  $(a,b,c)\in \ZZ^3$ for which 
$|a|,|b|,|c|\leq B$ and the  equations
\begin{align*}
cx_{2}^2-bx_{2}y_{2}+ay_{2}^2&=c^2,\\
c^2 x_3^2+(b^2- 2 a c) x_{3} y_3 +a^2 y_{3}^2&= b^2c, \\
c^3x_{4}^2-(b^3-3abc)x_{4}y_{4}+a^3y_{4}^2&= (b^2-ac)^2,
\end{align*}
each admit a solution $(x_j,y_j)\in \ZZ^2$, for $2\leq j\leq 4$. Testing for solubility of generalised Pell equations presents a formidable challenge, as exemplified in recent work \cite{chan, pell} on 
 the solubility of the negative Pell equation.

\section{The large sieve}

We 
now  turn to the task of estimating the counting function $N(B)$ that was defined in 
\eqref{eq:NB'}. 
Note that there are $O(B^2)$ choices  of $(a,b,c)$ with $abc= 0$.
Moreover, the resultant $\Res(R,ax^2+bx+c)$ is homogeneous of odd degree 
in the coefficients $a,b,c$. Hence, on replacing $(a,b,c)$ by $(-a,-b,-c)$, we have 
$$
N(B)\leq 
2\#\left\{(a,b,c)\in \ZZ^3: 
\begin{array}{l}
0<|a|,|b|,|c|\leq B\\
\Res(R,ax^2+bx+c)=1\\ 
\text{for some $R\in \ZZ[x]$ of odd degree}
\end{array}
\right\}
+O(B^2).
$$
We now suppose that 
$\Res(R,ax^2+bx+c)=1$\ 
for some $R\in \ZZ[x]$ of odd degree $n\geq 3$.
Then, for any choice of indices $(i,j)$  with $1\leq i<j\leq n+1$, it follows from Lemma \ref{lem:res2} that 
there exist $X,Y\in \QQ$ such that 
$$
a^{i-1}c^{n-j+1}\left(c^{j-i}X^2+(-1)^{j-i}(d_{j-i}-acd_{j-i-2})XY+a^{j-i}Y^2\right)= 1, 
$$
where formulae for $d_{j-i}$ and $d_{j-i-2}$ are given by Lemma \ref{lem:res3} in 
 terms of $a,b,c$.
 Letting $k=j-i$, we may conclude that the conic $C_{i,j}\subset \PP^2$ 
has a rational point, where $C_{i,j}$ is given by 
$$
c^{k}X^2+(-1)^{k}(d_{k}-acd_{k-2})XY+a^{k}Y^2= a^{i-1}c^{n-j+1}Z^2.
$$
Unfortunately, there is no advantage to be gained from 
examining the existence of rational points on more than  one conic.
 Taking $(i,j)=(1,2)$ and making an obvious change of variables, we see that $C_{1,2}(\QQ)\neq \emptyset$ if and only if 
the conic 
$$
C: \quad X^2-(b^2-4ac)Y^2=cZ^2
$$
has a $\QQ$-point.  It is worth highlighting that this is the key step at which our assumption on the parity of the degree of $R$  enters the argument.  We conclude that 
\begin{equation}\label{eq:comic}
N(B)\leq 2N_+(B)+O(B^2),
\end{equation}
with 
$$
N_+(B)=
\#\left\{(a,b,c)\in \ZZ^3: 
0<|a|,|b|,|c|\leq B,~
C(\QQ)\neq \emptyset 
\right\}.
$$
If $C(\QQ)\neq \emptyset$  then necessarily 
$(\frac{c}{p})\neq -1$
for any prime $p$ such that $v_p(b^2-4ac)=1$. 
Thus
\begin{equation}\label{eq:book}
N_+(B)\leq 
T(B,B,B),
\end{equation}
where 
if $B_1,B_2,B_3\geq 1$ we put 
$$
T(B_1,B_2,B_3)=
\#\left\{(a,b,c)\in \Omega: 
|a|\leq B_1,~|b|\leq B_2,~|c|\leq B_3
\right\},
$$
 where
$$
\Omega=
\left\{(a,b,c)\in \ZZ^3:
v_p(b^2-4ac)=1\Rightarrow \left(\frac{c}{p}\right)\neq -1
\text{ for all primes $p$}\right\}.
$$

We shall approach the problem of estimating the right hand side via the
large sieve, 
in the form 
\cite[Lemma~6.3]{large}.  
We first record an upper bound for the size of the reduction modulo $p^2$ of $\Omega$.

\begin{lemma}
Then $\#(\Omega \bmod{p^2})\leq (1-\omega_p)p^6$, where
$\omega_p=\frac{1}{2p}+O(\frac{1}{p^2}).$
\end{lemma}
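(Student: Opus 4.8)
The plan is to count residues $(a,b,c) \bmod p^2$ that lie in $\Omega$ by isolating the primary obstruction, which occurs when $v_p(b^2-4ac)=1$ and $\big(\tfrac{c}{p}\big)=-1$: such residues must be \emph{excluded}, and I will show they account for a proportion $\omega_p = \tfrac{1}{2p}+O(\tfrac1{p^2})$ of all $p^6$ residues modulo $p^2$. First I would fix $a,b,c \bmod p$ and ask: for how many lifts to residues mod $p^2$ do we get $v_p(b^2-4ac)=1$ together with $\big(\tfrac{c}{p}\big)=-1$? The condition $\big(\tfrac{c}{p}\big)=-1$ depends only on $c \bmod p$ and holds for exactly $\tfrac{p-1}{2}$ of the $p$ residue classes of $c$ mod $p$ (in particular forces $p \nmid c$). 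So the count of \emph{bad} residues mod $p^2$ is $p^3$ (the three free lifts from mod $p$ to mod $p^2$) times the number of $(a,b,c)\bmod p$ with $p \mid b^2-4ac$, $\big(\tfrac{c}{p}\big)=-1$, \emph{and} such that a positive proportion of lifts satisfy $v_p = 1$ exactly — but I would organize this more cleanly as follows.

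The cleaner route: count triples $(a,b,c) \bmod p^2$ with $v_p(b^2-4ac)=1$ and $\big(\tfrac cp\big)=-1$ directly. Write $D = b^2-4ac$. The number of $(a,b,c)\bmod p^2$ with $p \parallel D$ and $p\nmid c$: condition on $c \bmod p^2$ with $p\nmid c$ (this is $p^2-p$ choices, of which half, i.e. $\tfrac{p^2-p}{2}$, have $\big(\tfrac cp\big)=-1$), then on $b \bmod p^2$ (all $p^2$ choices), and note that for each such $(b,c)$ the quantity $4ac$ ranges bijectively over residues mod $p^2$ as $a$ does (since $\gcd(4c,p)=1$), so $D = b^2-4ac$ is equidistributed mod $p^2$; the number of $a$ giving $p \parallel D$ is $\#\{D \bmod p^2 : v_p(D)=1\} = p-1$. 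Hence the total bad count is $\tfrac{p^2-p}{2}\cdot p^2 \cdot (p-1) = \tfrac{p^2(p-1)^2}{2}$. One should check that bad residues with $p\mid c$ do not arise: if $p\mid c$ then $\big(\tfrac cp\big)=0\neq -1$, so indeed $p\mid c$ contributes nothing. Therefore
$$
\#(\Omega \bmod p^2) \leq p^6 - \frac{p^2(p-1)^2}{2} = p^6\left(1 - \frac{(p-1)^2}{2p^4}\right),
$$
so we may take $\omega_p = \frac{(p-1)^2}{2p^4} = \frac{1}{2p^2} + O(\frac{1}{p^3})$.

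Hmm — this gives $\omega_p \asymp p^{-2}$, not $p^{-1}$, so I have miscounted the density of the \emph{total} residue class; the issue is that $\#\{(a,b,c)\bmod p^2\}$ with the prescribed mod-$p$ behaviour is smaller. Let me instead reconsider: the right normalization is that among the $p^6$ residues mod $p^2$, the bad ones are those where $v_p(D) = 1$ and $\big(\tfrac cp\big)=-1$; but one should also recall the sieve only needs $p \mid D$ once, i.e. it is $p\parallel b^2-4ac$, and crucially the event $p \mid b^2-4ac$ already has density $\asymp 1/p$. So: the density of $(a,b,c) \bmod p^2$ with $p \mid D$ is $\tfrac1p + O(\tfrac1{p^2})$; among these, conditioning further on $v_p(D)=1$ and $\big(\tfrac cp\big) = -1$ keeps a positive proportion (roughly $(1-\tfrac1p)\cdot\tfrac12$), giving $\omega_p = \tfrac{1}{2p} + O(\tfrac1{p^2})$ as claimed. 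The main obstacle, and the step to execute carefully, is precisely this two-term asymptotic expansion: one must count $\#\{(a,b,c)\bmod p^2 : p\parallel b^2-4ac,\ \big(\tfrac cp\big)=-1\}$ exactly (splitting on $c \bmod p$, then lifting), confirm the leading term is $\tfrac12 p^5$, and control the error uniformly — along the way handling the degenerate loci $p \mid a$, $p\mid c$ correctly so they are absorbed into the $O(p^{-2})$ term. I would present the final bound as $\#(\Omega\bmod p^2) \le (1-\omega_p)p^6$ with $\omega_p = \tfrac{1}{2p} + O(\tfrac1{p^2})$, matching the statement.
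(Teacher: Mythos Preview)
Your ``cleaner route'' is in fact correct and slightly sharper than the paper's argument --- you simply made an arithmetic slip. You wrote
\[
\frac{p^2-p}{2}\cdot p^2 \cdot (p-1) = \frac{p^2(p-1)^2}{2},
\]
but $\tfrac{p^2-p}{2}=\tfrac{p(p-1)}{2}$, so the product is $\tfrac{p^3(p-1)^2}{2}$, not $\tfrac{p^2(p-1)^2}{2}$. With the correct value, the bad count is $\tfrac{p^3(p-1)^2}{2}=\tfrac{1}{2}p^5-p^4+\tfrac{1}{2}p^3$, hence
\[
\omega_p = \frac{(p-1)^2}{2p^3} = \frac{1}{2p} - \frac{1}{p^2} + \frac{1}{2p^3} = \frac{1}{2p} + O\!\left(\frac{1}{p^2}\right),
\]
exactly as required. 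Your entire second paragraph of self-correction is therefore unnecessary: the first computation was already right, only mis-multiplied. (You should also note, as the paper does, that one may assume $p>2$, so that $4c$ is a unit mod $p^2$ whenever $p\nmid c$.)

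For comparison: the paper bounds $S_p$ from below via a crude inclusion--exclusion, taking $p^3$ times the count of $(a,b,c)\in\FF_p^3$ with $b^2-4ac=0$ and $\bigl(\tfrac{c}{p}\bigr)=-1$, then subtracting off the $(a,b,c)\bmod p^2$ with $p^2\mid b^2-4ac$. Your argument is more direct --- fixing $c$ with $\bigl(\tfrac{c}{p}\bigr)=-1$ and using that $a\mapsto b^2-4ac$ is a bijection on $\ZZ/p^2\ZZ$ gives the \emph{exact} value of $S_p$ rather than just a lower bound. Either approach suffices for the lemma.
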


\begin{proof}
We may clearly assume that $p>2$. We have $\#(\Omega \bmod{p^2})=p^6-S_p$, where
$S_p$ is the number of $(a,b,c)\in (\ZZ/p^2\ZZ)^3$ for which $v_p(b^2-4ac)=1$ and 
$(\frac{c}{p})= -1$.
Clearly
\begin{align*}
S_p\geq~& 
p^3\#\left\{ (a,b,c)\in \FF_p^3: b^2-4ac=0, ~\left(\frac{c}{p}\right)= -1\right\}\\
&-\#\left\{
(a,b,c)\in (\ZZ/p^2\ZZ)^3: b^2-4ac\equiv 0\bmod{p^2}\right\}.
\end{align*}
The  first summand is  $\frac{1}{2}p^4(p-1)$
and the second summand is $O(p^4)$.
Hence we deduce that
$S_p\geq \frac{1}{2}p^5+O(p^4)$, which concludes the proof.
\end{proof}

It now follows from 
\cite[Lemma~6.3]{large} that 
\begin{align*}
T(B_1,B_2,B_3)
\ll \frac{(B_1+Q^4)(B_2+Q^4)(B_3+Q^4)}{L(Q)},
\end{align*}
for any $Q\geq 1$,
where
$$
L(Q)=\sum_{1\leq q\leq Q}\mu^2(q)\prod_{p\mid q} \frac{\omega_p}{1-\omega_p}\geq 
\sum_{1\leq q\leq Q}\frac{\mu^2(q)}{\tau(q)q}\prod_{p\mid q} \left(1-\frac{1}{p}\right)^C
$$
for an appropriate absolute constant $C\geq 1$.
A straightforward application of 
 \cite[Thm.~A.5]{FrIwOdC} now yields 
$
L(Q)\gg \sqrt{\log Q},
$
from which it follows that 
\begin{equation}\label{eq:Tve}
T(B_1,B_2,B_3)\ll \frac{B_1B_2B_3}{\sqrt{\log \min \{B_1,B_2,B_3\}}},
\end{equation}
on taking $Q=\min \{B_1,B_2,B_3\}^{\frac{1}{4}}$.
Theorem \ref{t:1} now follows on 
taking $B_1=B_2=B_3=B$ and 
inserting this into \eqref{eq:comic} and \eqref{eq:book}.

\section{Ordering by discriminant}

\subsection{Negative discriminant}\label{sec:disc1}

Let 
$$
S^-(X)=
\sum_{
\substack{K/\QQ \text{ quadratic}\\
-X<d<0
}}  h_\text{odd}(K),
$$
where $d=D_K$ is the discriminant of $K$.
In this section we shall compare 
$S^-(X)$ with variants of the counting function 
\begin{equation}\label{eq:NB-}
N^-(B)=\#\left\{(a,b,c)\in \ZZ^3: 
\begin{array}{l}
|a|,|b|,|c|\leq B, ~b^2<4ac\\
\Res(R,ax^2+bx+c)=\pm 1\\ 
\text{for some $R\in \ZZ[x]$ of odd degree}
\end{array}
\right\},
\end{equation}
in which a restriction to negative discriminants is imposed in the counting function 
\eqref{eq:NB'}.
We begin by proving the following result, which takes care of the negative discriminants in Corollary 
\ref{cor:1.2}.

\begin{lemma}\label{lem:plant}
We have 
$$
S^-(X)\ll \frac{X^{\frac{3}{2}}}{\sqrt{\log X}}. 
$$
\end{lemma}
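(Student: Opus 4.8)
The plan is to reduce the estimation of $S^-(X)$ to a point-count of the shape controlled by \eqref{eq:Tve}. First I would invoke Landesman's correspondence \cite[Thm.~1.1]{aaron}: an element of the class group of a quadratic field $K$ has odd order precisely when the associated (primitive, integral) binary quadratic form $q$ admits an odd-degree polynomial $R\in\ZZ[x]$ with $\Res(R,q)=\pm 1$. Thus $h_{\mathrm{odd}}(K)$ counts $\SL_2(\ZZ)$-equivalence classes of forms $q$ of discriminant $d=D_K$ for which such an $R$ exists. The immediate obstacle is that each class contains infinitely many forms, so I cannot simply count coefficient triples; I must choose a bounded set of representatives for each class and each discriminant.

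The standard remedy is \emph{Gauss reduction}: for negative discriminant $d$, every $\SL_2(\ZZ)$-class of forms $ax^2+bxy+cy^2$ of discriminant $d$ has a unique reduced representative satisfying $|b|\le a\le c$, whence $|d|=4ac-b^2$ forces $a\le\sqrt{|d|/3}$ and, because $c=(b^2-|d|\cdot(-1))/(4a)$ is then determined, $c\ll |d|/a \ll \sqrt{|d|}\,\cdot\sqrt{|d|}$... more precisely $ac\le |d|$ so $a,|b|\le\sqrt{|d|}$ and $c\le |d|$. Summing over $|d|<X$ and collecting reduced forms over all these discriminants, every form appearing has $|a|\le\sqrt{X}$, $|b|\le\sqrt{X}$, $|c|\le X$. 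Moreover $\Res(R,q)=\pm1$ for odd-degree $R$ is exactly the condition defining membership in (a sign-twist of) the set counted by $N(B)$, so the proof of Theorem~\ref{t:1} applies verbatim to the reduced forms: the necessary local condition at primes $p$ with $v_p(b^2-4ac)=v_p(d)=1$ places $(a,b,c)$ in the set $\Omega$, giving
\[
S^-(X)\le \#\{(a,b,c)\in\Omega: |a|\le \sqrt{X},\ |b|\le \sqrt{X},\ |c|\le X\}=T(\sqrt{X},\sqrt{X},X).
\]
Applying \eqref{eq:Tve} with $B_1=B_2=\sqrt{X}$, $B_3=X$ yields
\[
S^-(X)\ll \frac{\sqrt{X}\cdot\sqrt{X}\cdot X}{\sqrt{\log\min\{\sqrt{X},X\}}}=\frac{X^2}{\sqrt{\log\sqrt{X}}}\ll\frac{X^{2}}{\sqrt{\log X}},
\]
which is worse than claimed by a factor $\sqrt{X}$; so a cruder bounding of $(a,b,c)$ is not enough.

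To recover the sharp exponent $3/2$ I would not throw away the constraint $4ac-b^2=|d|$. Having fixed a reduced triple, the discriminant is determined, so I should count reduced triples directly rather than discriminants-times-triples. For a reduced form one has $1\le a\le\sqrt{|d|/3}$ and $b$ ranges over $O(a)$ values, and then $c$ is forced; so the number of reduced forms of discriminant $d$ is $O(\sqrt{|d|})$, recovering \eqref{eq:class}. The refinement is to impose, in addition, the sieve condition: $(a,b,c)\in\Omega$, i.e. $(\tfrac{c}{p})\ne -1$ for every $p$ with $v_p(4ac-b^2)=1$, equivalently $v_p(|d|)=1$. I would therefore run the large sieve on the two free variables $(a,b)$ in the box $|a|,|b|\ll\sqrt{X}$ — since $c$ and hence $d$ is a polynomial in $a,b$ (for fixed target discriminant range), the congruence condition modulo $p^2$ still cuts out a density-$(1-\omega_p)$ subset with $\omega_p=\tfrac{1}{2p}+O(\tfrac1{p^2})$ by essentially the same computation as the displayed Lemma (one checks the fibre over $(a,b)\bmod p^2$ of the bad locus $v_p(b^2-4ac)=1,\ (\tfrac cp)=-1$ has the claimed size). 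The large sieve \cite[Lemma~6.3]{large} then gives a saving of $L(Q)\gg\sqrt{\log Q}$ over the trivial count $\#\{\text{reduced forms},\ |d|<X\}\ll X^{3/2}$, so
\[
S^-(X)\ll \frac{X^{3/2}}{\sqrt{\log X}}.
\]

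The main obstacle I anticipate is the bookkeeping in this last paragraph: making precise that Gauss-reducing the forms of all discriminants $|d|<X$ simultaneously, and then treating $c$ as determined by $(a,b,d)$, leads to a genuinely two-dimensional sieve problem to which \cite[Lemma~6.3]{large} applies with the same $\omega_p$. One must verify that the local densities are unaffected by restricting $c$ to lie in the arithmetic progression forced by $4ac\equiv b^2\ (\mathrm{mod}\ \text{stuff})$ and by the size constraint tying $d$ to $4ac-b^2$; the key point is that the bad primes are exactly those with $v_p(d)=1$, and for such $p$ the value of $c\bmod p$ is a free parameter (since $p\nmid a$ when $v_p(b^2-4ac)=1$ and $p\nmid b$ forces... one analyses cases), so the character condition $(\tfrac cp)\ne-1$ still excludes a proportion $\tfrac12+O(\tfrac1p)$. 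Everything else — the passage from $L(Q)\gg\sqrt{\log Q}$ via \cite[Thm.~A.5]{FrIwOdC}, the choice $Q=X^{c}$ for small $c>0$ — is identical to Section~\ref{s:3}... to the large sieve section above.
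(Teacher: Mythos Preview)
You correctly set up the problem via Landesman's correspondence and Gauss reduction, and you correctly diagnose that the crude box $|a|,|b|\le\sqrt X$, $|c|\le X$ fed into \eqref{eq:Tve} loses a factor $\sqrt X$. The gap is in your proposed repair. The ``two-dimensional sieve on $(a,b)$'' does not make sense as stated: $c$ is \emph{not} a polynomial in $(a,b)$. For a reduced form with $|b|\le a\le c$ and $|d|<X$ the third coefficient $c$ still ranges freely over an interval of length $\asymp X/a$, so the parameter space is genuinely three-dimensional. Moreover the membership condition for $\Omega$ --- that $(\tfrac{c}{p})\ne -1$ whenever $v_p(b^2-4ac)=1$ --- depends on all of $a,b,c$ modulo $p^2$; there is no way to read it off $(a,b)$ alone. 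If instead you fix $d$ first so that $c$ is determined by $(a,b,d)$, then the relevant primes are only the finitely many $p\,\|\,d$, and the large sieve (which requires conditions at \emph{all} primes up to $Q$) no longer applies; you would be pushed towards the genus-theory argument the paper mentions around \eqref{eq:gerth}, which is a different proof.

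The paper keeps the three-variable sieve \eqref{eq:Tve} and instead exploits the hyperbolic constraint $ac\ll X$ (coming from $4ac=|d|+b^2\le 2X$) by dyadic decomposition. One places $a\in[A,2A)$, $|b|\in[B,2B)$, $c\in[C,2C)$ with $B\ll A\ll\sqrt X$ and $A\ll C\ll X/A$, and applies \eqref{eq:Tve} to get $S^-(A,B,C)\ll ABC/\sqrt{\log\min\{A,B,C\}}$. Summing over dyadic $A,B,C$ then yields $\sum A^2\cdot(X/A)\ll X\sum_{A\ll\sqrt X}A\ll X^{3/2}$, with the desired $\sqrt{\log X}$ saving. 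The one subtlety --- which your sketch does not address --- is that $\min\{A,B,C\}=B$ can be as small as $1$, killing the logarithmic saving; the paper handles this by first disposing of the range $|b|\le X^{1/2-1/10}$ trivially (there the count is $\ll X^{3/2-1/10}\log X$ with no sieve needed), so that in the main range $\log B\gg\log X$. This dyadic splitting, not a reduction to two variables, is the missing idea.
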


\begin{proof}
As described by Buell \cite[Prop.~6.20]{Buell}, 
there is an isomorphism between the set of equivalence classes of primitive binary quadratic forms of discriminant $d$ and  the narrow ideal class group of $K$. A quadratic form $q(x,y)=ax^2+bxy+cy^2$ with discriminant $d<0$ is \emph{reduced} (in the sense of \cite[Eq.~(2.1)]{Buell}) if 
$$
|b|\leq a\leq c.
$$
Any binary quadratic form with 
discriminant  $d$ is equivalent to a reduced quadratic form of 
discriminant  $d$ by  \cite[Thm.~2.3]{Buell}.
In the light of the work by Landesman \cite[Thm.~1.1]{aaron}, the sum
$S^-(X)$ is bounded by the number of 
reduced primitive positive definite quadratic forms $q$ of discriminant  $d\in (-X,0)$ for which 
there exists 
polynomial $R\in \ZZ[x]$ of odd degree whose   resultant with $q(x,1)$ is $\pm 1$. 
If $|d|=4ac-b^2\geq 3a^2$, so that 
$a\ll \sqrt{d}\ll \sqrt{X}$.
If $|b|\leq X^{\frac{1}{2}-\frac{1}{10}}$, then
$|d|\leq X$ implies that $ac\ll X$.  
Dropping the resultant condition, we see that the overall contribution to $S^-(X)$ from 
such binary quadratic forms 
is  $O(X^{\frac{3}{2}-\frac{1}{10}}\log X)$, which is satisfactory. 
Therefore we can proceed under the assumption that $|b|>X^{\frac{1}{2}-\frac{1}{10}}$.
Since $b^2\leq a^2\ll |d|$, we have $ac\ll|d|+b^2\ll X$.
We therefore  concentrate on the set of $(a,b,c)\in \Z_{\geq 0}\times \Z\times \Z_{\geq 0}$ which are constrained by the inequalties
\[X^{\frac{1}{2}-\frac{1}{10}}<|b|\leq a\ll  \sqrt{X}\quad \text{ and }\quad a\leq c\ll \frac{1}{a}X.\]
It will be convenient to sort the contribution into dyadic intervals, with 
\begin{equation}\label{eq:fox}
A\leq a<2A,\quad B\leq |b|<2B,\quad C\leq c<2C,
\end{equation}
where $A,B,C$ run over powers of $2$ and satisfy
\begin{equation}\label{eq:trot}
X^{\frac{1}{2}-\frac{1}{10}}\ll B\ll A\ll\sqrt{X}\quad \text{ and }\quad A\ll C\ll \frac{1}{A}X.
\end{equation}
Let $S^-(A,B,C)$ denote the overall contribution to $S^-(X)$ from the binary quadratic forms 
$ax^2+bxy+cy^2$ whose coefficients are constrained by \eqref{eq:fox}.
Then it follows from \eqref{eq:Tve}
that 
$$
S^-(A,B,C)\ll \frac{ABC}{\sqrt{\log B}}\ll 
\frac{ABC}{\sqrt{\log X}},
$$
since $C\gg A\gg B\gg X^{\frac{1}{2}-\frac{1}{10}}$. 
It remains to  sum this  estimate for 
 $A,B,C$ running over powers of $2$ that are constrained by the 
inequalities in \eqref{eq:trot}. This yields
\begin{align*}
S^-(X)\leq \sum_{A,B,C} S^-(A,B,C)
&\ll \sum_{A,C} 
\frac{A^2C}{\sqrt{\log X}}\\
&\ll \frac{X}{\sqrt{\log X}}
\sum_{A\ll \sqrt{X}} 
A\\
&\ll \frac{X^{\frac{3}{2}}}{\sqrt{\log X}},
\end{align*}
which thereby completes the proof of the lemma.
\end{proof}

The latter result shows how an upper bound for a variant of the counting function \eqref{eq:NB-},
in which  lop-sided box sizes are allowed, can be used to provide an upper bound for $S^-(X)$. 
Fixing a choice of integer $n\in \NN$, 
our next result reverses the process and shows how 
$N_n^-(B)$ can be bounded in terms of the sum 
$$
S_n^-(X)=
\sum_{
\substack{K/\QQ \text{ quadratic}\\
-X<d<0
}}  h_n(K).
$$

\begin{lemma}
Let $n\in \NN$. Then 
$$
N_n^-(B)\ll 
B^2 \sum_{j=0}^{2\log_2 B+O(1)}
 \frac{S_n^-(2^j)}{2^j}.
$$
\end{lemma}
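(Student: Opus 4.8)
The plan is to reverse the dictionary used in Lemma~\ref{lem:plant}: there, a bound on $T(B_1,B_2,B_3)$ was fed into the count of reduced forms; here we want to use a bound on $S_n^-(X)$ (the total $n$-torsion over discriminants $-X<d<0$) to control $N_n^-(B)$. The first step is to recall what $N_n^-(B)$ counts. A triple $(a,b,c)\in\ZZ^3$ with $|a|,|b|,|c|\le B$, $b^2<4ac$, contributes precisely when $\Res(R,ax^2+bx+c)=\pm1$ for some $R\in\ZZ[x]$ of degree $n$. By Landesman's correspondence \cite[Thm.~1.1]{aaron}, for such a triple the binary quadratic form $ax^2+bxy+cy^2$ (after reducing to a primitive form and passing to its discriminant $d$) represents an element of order dividing $n$ in the class group of the quadratic field of discriminant $d$ (the primitivity and the fact that $d$ is a field discriminant up to square factors needs a short bookkeeping remark, absorbing square divisors of the content into $O(B^2)$ error or into a divisor-sum loss; since we are aiming only for a bound of the shape $B^2\sum S_n^-(2^j)/2^j$ this is harmless).

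Next I would organize the count by the discriminant. Write $|d|=4ac-b^2$, so $0<|d|\ll B^2$. Sort into dyadic ranges $2^{j-1}\le |d|<2^j$ for $0\le j\le 2\log_2 B+O(1)$. Within the range $|d|\asymp 2^j$, the key point is that the map $(a,b,c)\mapsto(\text{class of the form }ax^2+bxy+cy^2,\ \text{value }a)$ is finite-to-one onto (class of order dividing $n$, integer $a\le B$): indeed $b$ is determined mod $2a$ by the class and the choice of a representative, and then $c=(b^2+|d|)/(4a)$ is determined. More cleanly, for each field $K$ of discriminant $d$ with $|d|\asymp 2^j$, each $n$-torsion ideal class is represented by $O(1)$ reduced forms, hence by $O(\#\{a\le B\})=O(B)$ forms $ax^2+bxy+cy^2$ with $|a|\le B$ once one also counts the $O(B/|a|)$... — but the cleanest route is: the number of triples $(a,b,c)$ with $|a|,|b|,|c|\le B$ lying over a fixed ideal class of discriminant $d$, $|d|\asymp 2^j$, is $O(B^2/2^j)$, since $a$ ranges over $O(B)$ values, $b$ is then pinned mod $2a$ so has $O(1+B/a)$ choices, and $c$ is then determined; summing $\sum_{a\le B}(1+B/a)\ll B\log B$, which together with the count $\#\{\text{forms per class}\}=O(1)$ and the identity $\#\{\text{classes of order}\mid n,\ \text{disc}\in(-2^j,-2^{j-1}]\}\le S_n^-(2^j)$ gives a contribution $\ll (B\log B)\,S_n^-(2^j)$ from the dyadic block — too lossy. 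The efficient version, which I would push: fix the class, fix $a$; then $b$ runs over a single residue class mod $2a$ inside $[-B,B]$, giving $\le 1+B/a$ values, but we also need $|c|\le B$, i.e. $(b^2+|d|)/(4a)\le B$, i.e. $|d|\le 4aB$; combined with $|d|\asymp 2^j$ this forces $a\gg 2^j/B$. So $a$ runs over $2^j/B\ll a\le B$, and $\sum_{2^j/B\ll a\le B}(1+B/a)\ll B + B\log(B^2/2^j)\ll B\log B$. This still costs a $\log$; to remove it one uses that for most $a$ in this range the residue class for $b$ contributes $\le 1$ value (when $a\gg B$... no).

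Given the target shape $B^2\sum_j S_n^-(2^j)/2^j$ allows no spare logarithm, the right count per dyadic block must be $\ll B^2 S_n^-(2^j)/2^j$. I would get this by \emph{not} fixing $a$ first: for a fixed class of discriminant $d$ with $|d|\asymp 2^j$, the triples $(a,b,c)$ with $|a|,|b|,|c|\le B$ over that class are in bijection with the lattice of $\SL_2(\ZZ)$-translates of one reduced representative intersected with the box, and a standard count of lattice points in the region $\{|a|,|b|,|c|\le B\}$ on the affine surface $4ac-b^2=|d|$ gives $O(B^2/|d|)=O(B^2/2^j)$ of them, since that surface has ``area'' $\asymp B^2/|d|$ inside the box (the fibres over $a$ have length $\asymp B/a$ but only for $a\ge |d|/(4B)$, and $\int_{|d|/B}^{B} (B/a)\,\frac{da}{?}$ — the correct homogeneous count is $B^2/|d|$). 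Multiplying by the $O(1)$ classes-to-forms factor and by $\#\{\text{classes of order}\mid n\text{ in the block}\}\le S_n^-(2^j)$, the block contributes $\ll B^2 S_n^-(2^j)/2^j$; summing over the $O(\log B)$ values of $j$ gives the claim. The main obstacle is making the ``$O(B^2/2^j)$ triples per class'' step rigorous and uniform: one must show that on the quadric $4ac-b^2=|d|$ the number of integer points in the cube of side $B$ is $O(B^2/|d|)$ with an absolute implied constant (including the degenerate/boundary regimes $|d|\asymp B^2$ and $|d|$ small), and that the passage from ideal classes to primitive reduced forms to the non-primitive triples actually counted by $N_n^-(B)$ only inflates things by $O(1)$ — the content $g=\gcd(a,b,c)$ contributes a factor $\sum_{g}1$ over $g\le B$ with $g^2\mid$ stuff, which must be folded into the sum over $j$ rather than producing a stray $\log$.
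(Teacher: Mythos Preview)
Your overall architecture matches the paper's: reduce to primitive forms, pass to ideal classes via Landesman, show that each $n$-torsion class in a discriminant range $\asymp 2^j$ accounts for at most $O(B^2/2^j)$ triples in the box, and sum dyadically. The gap is exactly where you flag it: your attempts at the per-class count either lose a logarithm (the ``fix $a$, then $b$ in a residue class mod $2a$, then $c$ determined'' route genuinely gives $\sum_{a}(1+B/a)\asymp B\log B$) or remain an assertion (``standard lattice point count on the quadric''). The paper's device is different and clean. After imposing $\gcd(a,b,c)=1$ (immediate, since $\gcd(a,b,c)^n\mid\Res$), write $b^2-4ac=Dr^2$ with $D$ fundamental. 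By \cite[Prop.~7.1]{Buell} there is a \emph{reduced} form $f$ of discriminant $D$ and an integer matrix $(\begin{smallmatrix}\alpha&\beta\\\gamma&\delta\end{smallmatrix})$ of determinant $r$ with $f(\alpha x+\beta y,\gamma x+\delta y)=q(x,y)$; in particular $|f(\alpha,\gamma)|=|a|\le B$ and $|f(\beta,\delta)|=|c|\le B$. Since $f$ is positive definite with leading coefficient $a_0\ll\sqrt{|D|}$, completing the square gives $\#\{(x,y)\neq(0,0):|f(x,y)|\le B\}\ll B/\sqrt{|D|}$. Now \emph{drop} the determinant constraint and count the columns $(\alpha,\gamma)$ and $(\beta,\delta)$ independently: this yields $O(B^2/|D|)$ triples over a fixed $f$, with no logarithm. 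The resultant condition transfers to $f$, so $f$ lies in an $n$-torsion class, and summing over fundamental $D$ with $|D|\ll B^2$ gives exactly $B^2\sum_D h_n(\QQ(\sqrt D))/|D|$.

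This also explains why your organisation by the full discriminant $d$ is the wrong move. The quantity $S_n^-$ sums over quadratic \emph{fields}, i.e.\ fundamental discriminants, whereas classes of forms of non-fundamental discriminant $d=Dr^2$ live in class groups of non-maximal orders; your ``short bookkeeping remark'' does not bridge that. The paper never fixes $r$: it descends immediately to the fundamental $D$, absorbing all $r$ into the matrix count above, and thereby lands directly on a sum over fields. The price is that the per-fibre bound is $O(B^2/|D|)$ rather than the $O(B^2/|d|)$ you were aiming for, but since the outer sum is then only over fundamental $D$ this is exactly what is needed.
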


\begin{proof}
Note that we can trivially impose that $\gcd(a,b,c)=1$, because we always have $\gcd(a,b,c)^n\mid \Res(R,ax^2+bx+c)$ for any $R\in\Z[x]$ with degree $n$. 
The discriminant of $q(x,y)=ax^2+bxy+cy^2$ can be written as $Dr^2$, where $D$ is a fundamental discriminant and $r\in\Z$ is non-zero. 
In particular $|D|\leq |D|r^2\ll B^2$, since $\max\{|a|,|b|,|c|\}\leq B$.

By \cite[Prop.~7.1]{Buell}, there exists a form $f$ of discriminant $D$, and $\alpha,\beta,\gamma,\delta \in\Z$, such that $f(\alpha x+\beta y,\gamma x+\delta y)=q(x,y)$ and $\alpha\delta-\beta\gamma=r$.
The condition $|a|,|c|\leq B$ implies that
\begin{equation}\label{eq:pair}
|f(\alpha ,\gamma )|=|a|\leq B, \text{ and }|f(\beta ,\delta )|=|c|\leq B.
\end{equation}
Without loss of generality, we can take $f(x,y)=a_0x^2+b_0xy+c_0y^2$ to be a reduced form, so that in particular $0\leq a_0\ll \sqrt{|D|}\ll B$. Since $D=b_0^2-4a_0c_0<0$, we must have $a_0c_0\neq0$. Thus, on writing 
$$
f(x,y)=\frac{1}{4a_0}\left((2a_0x+b_0y)^2+|D|y^2\right),
$$ 
we see that $|f(x ,y)|\leq B$ implies that $|y|\leq\sqrt{ \frac{4a_0B}{|D|}}$ and $|x+\frac{b_0}{2a_0}y|\leq \sqrt{\frac{B}{a_0}}$. Therefore
\[\#\{(x,y)\in\Z^2: |f(x,y)|\leq B,\ y\neq 0\}\ll \sqrt{ \frac{4a_0B}{|D|}}\left( \sqrt{\frac{B}{a_0}}+1\right)\ll \frac{B}{\sqrt{|D|}}.
\]
By symmetry, it follows that 
\[\#\{(x,y)\in\Z^2: |f(x,y)|\leq B,\ (x,y)\neq (0,0)\}\ll \frac{B}{\sqrt{|D|}}.\]
Given $f$, we may conclude that there are $O(B^2/|D|)$ 
choices of  $(\alpha ,\gamma )$ and $(\beta ,\delta )$ that satisfy~\eqref{eq:pair}.

The condition that $\Res(R,q)=\pm 1$ for some $R\in\Z[x]$ of degree $n$ implies that 
$\Res(R(dx-b,-cx+a),f(x))=\Res(R(x),f(ax+b,cx+d))=\pm 1$.
Appealing to \cite[Thm.~1.1]{aaron}, given any fundamental discriminant $D<1$, we have the upper bound
\[
\#\left\{(a,b,c)\in \ZZ^3: 
\begin{array}{l}
|a|,|b|,|c|\leq B\\
b^2-4ac=Dr^2\text{ for some }r\in\Z\\
\Res(R,ax^2+bx+c)=\pm 1\\ 
\text{for some $R\in \ZZ[x]$ of degree $n$}
\end{array}
\right\}
\ll \frac{B^{2}}{|D|}h_n(\Q(\sqrt{D})).
\]
Finally we sum over all $D$ in dyadic intervals up to $D\ll B^2$ 
to conclude that 
\begin{align*}
N_n^-(B)
&\ll B^2 \sum_{\substack{D<0\\ |D|\ll B^2}} \frac{h_n(\Q(\sqrt{D}))}{|D|}
\ll B^2 \sum_{X=2^j \ll B^2} \frac{S_n^-(X)}{X},
\end{align*}
as claimed in the lemma.
\end{proof}

Using this result we may easily complete the proof of 
Theorem \ref{t:lama}. The second part is a direct consequence of \eqref{eq:wide}, since 
 $j=O(\log B)$. For the first part we apply the work 
of Koymans and Thorner
\cite{koy}, which gives 
$$
N_n^-(B)\ll_\ve 
B^2 \sum_{2^j \ll B^2} 
(2^j)^{\frac{1}{2}-\frac{1}{n+1}+\ve}\ll B^{3-\frac{2}{n+1}+2\ve},
$$
for any $\ve>0$.

\subsection{Positive discriminant}\label{sec:disc2}

Let 
$$
S^+(X)=
\sum_{
\substack{K/\QQ \text{ quadratic}\\
0<d<X
}}  h_\text{odd}(K),
$$
where $d=D_K$ is the discriminant of $K$.
In this section we shall prove the following result, which completes the proof of Corollary \ref{cor:1.2}, once combined with Lemma \ref{lem:plant}.

\begin{lemma}\label{lem:fish}
We have 
$$
S^+(X)\ll \frac{X^{\frac{3}{2}}}{\sqrt{\log X}}.
$$
\end{lemma}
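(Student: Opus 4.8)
The plan is to mirror the structure of the proof of Lemma~\ref{lem:plant}, the key difference being that reduced indefinite binary quadratic forms of discriminant $d>0$ are no longer bounded in all coefficients, so a direct count must be replaced by a count over a reduced system of representatives of the cycle (the ``river''). First I would recall from Buell's book the reduction theory for indefinite forms: every class of primitive forms of discriminant $d>0$ contains reduced forms $ax^2+bxy+cy^2$ (with $0<b<\sqrt d$ and $\sqrt d - b < 2|a| < \sqrt d + b$, say), that these come in cycles under the action of the automorphism, and that one may bound the number of reduced forms in a given class in terms of $\log(\sqrt d)$ divided by the logarithm of the fundamental unit $\varepsilon_d$ (equivalently, the regulator $R_K$). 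Since Dirichlet's class number formula gives $h(K)R_K \asymp \sqrt d \, L(1,\chi)$, and we only need the odd part $h_{\mathrm{odd}}(K)\le h(K)$, summing $h_{\mathrm{odd}}(K)$ over $0<d<X$ is comparable to summing, over all reduced primitive forms of discriminant $d<X$ carrying an odd-order class, a weight that telescopes against the cycle length.

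The second step is to use Landesman's correspondence \cite[Thm.~1.1]{aaron} exactly as in Lemma~\ref{lem:plant}: a reduced form $q$ of discriminant $d$ represents an odd-order class of $\Cl(K)$ only if there is an odd-degree $R\in\ZZ[x]$ with $\Res(R,q(x,1))=\pm1$, hence only if the triple $(a,b,c)$ of its coefficients lies in the set $\Omega$ of Section~4. A reduced indefinite form has $b<\sqrt d$ and $|a|,|c|\ll \sqrt d$, so after dyadic decomposition $A\le |a|<2A$, $B\le b<2B$, $C\le |c|<2C$ with $A,B,C\ll\sqrt X$ and $AC\asymp d\le X$, the estimate \eqref{eq:Tve} gives a bound $S^+(A,B,C)\ll ABC/\sqrt{\log(\min\{A,B,C\})}$ for the number of such triples in $\Omega$ in that box. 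The delicate point, and the main obstacle, is that here $\min\{A,B,C\}$ can be small --- a reduced indefinite form can have one tiny coefficient --- so one cannot simply assert $\log(\min)\gg\log X$ as was done in Lemma~\ref{lem:plant}. I expect to handle this by splitting: when all of $A,B,C$ exceed a power of $X$ the large sieve gives the full $\sqrt{\log X}$ saving; the remaining boxes, where some coefficient is $\le X^{\delta}$, are rare enough (the count of reduced forms with a coefficient that small, summed with the cycle-length weight, is $O(X^{3/2-\delta'})$ for some $\delta'>0$) to be absorbed trivially, as with the range $|b|\le X^{1/2-1/10}$ in Lemma~\ref{lem:plant}.

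Finally I would assemble the pieces: for each fundamental discriminant $0<d<X$ the number of reduced primitive forms whose class has odd order is $\ll h_{\mathrm{odd}}(K)\cdot(\text{bounded cycle multiplicity})$, but rather than tracking cycle multiplicities it is cleaner to run the argument the other way, bounding $S^+(X)=\sum_{0<d<X}h_{\mathrm{odd}}(K)$ directly by $\sum_{0<d<X} h_{\mathrm{odd}}(K)\le \frac{1}{\log\varepsilon_d}\cdot(\text{number of reduced forms of disc }d)\cdot(\text{something})$; concretely, use $h_{\mathrm{odd}}(K)\le h(K)\ll \sqrt{d}\,L(1,\chi)/R_K\ll \sqrt d \,(\log d)/R_K$ together with the fact, noted already in the discussion after Corollary~\ref{cor:1.2}, that $R_K\gg \log|D_K|$, to reduce matters to a count of triples $(a,b,c)\in\Omega$ with $|a|,|b|,|c|\ll\sqrt X$. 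Dyadically summing the bound from \eqref{eq:Tve} over such boxes then yields
\[
S^+(X)\ll \sum_{\substack{A,B,C\ll\sqrt X\\ AC\ll X}}\frac{ABC}{\sqrt{\log X}}\ll \frac{X^{3/2}}{\sqrt{\log X}},
\]
after discarding the negligibly small boxes as above. This matches the claimed bound and, combined with Lemma~\ref{lem:plant}, completes the proof of Corollary~\ref{cor:1.2}. The one genuinely new ingredient beyond Lemma~\ref{lem:plant} is the use of the regulator lower bound to convert a class-number sum over positive discriminants into a bounded-box lattice-point count; everything else is a routine adaptation.
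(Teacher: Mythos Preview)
Your central premise is wrong, and once it is corrected the proof becomes \emph{easier} than Lemma~\ref{lem:plant}, not harder. You assert that ``reduced indefinite binary quadratic forms of discriminant $d>0$ are no longer bounded in all coefficients''. In fact the opposite is true: by \cite[Eq.~(3.1) and Prop.~3.1]{Buell}, a reduced form of discriminant $d>0$ satisfies
\[
0<b<\sqrt d,\qquad \sqrt d-b<2|a|<\sqrt d+b,\qquad \sqrt d-b<2|c|<\sqrt d+b,
\]
so that $|a|,|b|,|c|<\sqrt d\le\sqrt X$. (It is the \emph{definite} case that forces the lop-sided boxes and the dyadic argument of Lemma~\ref{lem:plant}, because there $c$ can be as large as $|d|$.) The paper's proof is therefore a single line: each odd-order class has a reduced representative with all coefficients in $[-\sqrt X,\sqrt X]$, so by \cite[Thm.~1.1]{aaron} one has $S^+(X)\le N^+(\sqrt X)\le N(\sqrt X)$, and Theorem~\ref{t:1} gives $N(\sqrt X)\ll X^{3/2}/\sqrt{\log X}$.

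All of the machinery you propose---cycle lengths, the regulator lower bound, Dirichlet's class number formula, the splitting into boxes where $\min\{A,B,C\}$ is small---is unnecessary. The cycle multiplicity is harmless for an upper bound (each class has at least one reduced form, so counting reduced forms only overcounts), and the worry about tiny coefficients evaporates because we never need the lop-sided estimate~\eqref{eq:Tve} with different $B_i$: we apply it once with $B_1=B_2=B_3=\sqrt X$. Your sketch also conflates two distinct routes (the sieve route via $\Omega$ and the genus-theory route via $h_{\text{odd}}(K)\le h(K)/2^{\omega(D_K)-1}$ discussed after Corollary~\ref{cor:1.2}); either works, but mixing them as you do does not yield a coherent argument.
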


We shall prove this result by  relating $S^+(X)$  to the counting function
$$
N^+(B)=\#\left\{(a,b,c)\in \ZZ^3: 
\begin{array}{l}
|a|,|b|,|c|\leq B, ~b^2>4ac\\
\Res(R,ax^2+bx+c)=\pm 1\\ 
\text{for some $R\in \ZZ[x]$ of odd degree}
\end{array}
\right\},
$$
in which a restriction to positive discriminants is imposed in
\eqref{eq:NB'}.  Thus, since $N^+(B)\leq N(B)$, 
 Lemma \ref{lem:fish} is a direct consequence of 
applying Theorem~\ref{t:1} in 
 the following result.

\begin{lemma}
We have 
$
S^+(X)\leq N^+( \sqrt{X}).
$
\end{lemma}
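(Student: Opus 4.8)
The claim to prove is $S^+(X)\le N^+(\sqrt X)$, where
$$
S^+(X)=\sum_{\substack{K/\QQ\text{ quadratic}\\ 0<d<X}} h_\text{odd}(K),
$$
$d=D_K$, and $N^+(B)$ counts integer triples $(a,b,c)$ with $|a|,|b|,|c|\le B$, positive discriminant $b^2>4ac$, and $\Res(R,ax^2+bx+c)=\pm1$ for some odd-degree $R\in\ZZ[x]$.

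\textbf{Plan of proof.} The plan is to mirror the strategy of Lemma~\ref{lem:plant}, but now using the \emph{reduction theory of indefinite binary quadratic forms}: for a fundamental discriminant $d>0$, each narrow ideal class of $K=\QQ(\sqrt d)$ contains a reduced form $q(x,y)=ax^2+bxy+cy^2$ of discriminant $d$, and for reduced indefinite forms one has $0<b<\sqrt d$ and $|\sqrt d-2|a||<b$, hence in particular $|a|,|b|,|c|\ll\sqrt d\ll\sqrt X$. (One uses the version of reduction in \cite{Buell}; any fixed normalisation that bounds all three coefficients by $O(\sqrt d)$ will do.) Thus, just as in the imaginary case, each element of the class group contributes at least one primitive form whose coefficients lie in the box of size $\sqrt X$. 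First I would recall, from \cite[Thm.~1.1]{aaron}, that an odd-order class corresponds to a form $q$ admitting an odd-degree polynomial $R$ with $\Res(R,q(x,1))=\pm1$; Landesman's correspondence is insensitive to the sign of the discriminant, so it applies verbatim here. Therefore every odd-order element of $\Cl(K)$ is represented by a reduced primitive form $ax^2+bxy+cy^2$ of discriminant $d$ with $|a|,|b|,|c|\le\sqrt X$, with $b^2-4ac=d>0$, and with $\Res(R,ax^2+bx+c)=\pm1$ for some odd-degree $R$. Summing over all $K$ with $0<d<X$ and over all odd-order classes, and noting that distinct classes give distinct reduced forms (so there is no over-counting), the total is at most $N^+(\sqrt X)$, which is the assertion.

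\textbf{Key steps, in order.} (1) Reduce to fundamental discriminants: since $h_\text{odd}$ only sees the class group of the maximal order and $d=D_K$ is by definition fundamental, this is immediate. (2) Invoke indefinite reduction theory to attach to each narrow ideal class a reduced form with all coefficients $O(\sqrt d)$ — I would cite the relevant statement in \cite{Buell} and just quote the bounds. (3) Check that each such reduced form is primitive and has discriminant exactly $d>0$, so it contributes to $N^+(\sqrt X)$ once we know the resultant condition. (4) Apply Landesman's correspondence \cite[Thm.~1.1]{aaron} to odd-order classes to produce the odd-degree $R$ with $\Res(R,q)=\pm1$. (5) Confirm that the map {class} $\mapsto$ {reduced form} is injective (standard in reduction theory for definite forms, and for indefinite forms one works with narrow classes / the relevant cycle of reduced forms; since we only need an \emph{upper} bound, any injection suffices), so that $\sum_K\sum_{\text{odd classes}}1\le N^+(\sqrt X)$.

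\textbf{Main obstacle.} The delicate point is the indefinite reduction step (2): unlike the definite case, a reduced indefinite form is not unique in its class — reduced forms occur in cycles, and one must be slightly careful that (i) all coefficients, not just $a$ and $b$, are genuinely bounded by $O(\sqrt d)$, and (ii) one picks a \emph{single} representative per class to avoid multiplicity issues. For (i), the bound $c=(b^2-d)/(4a)$ together with $b<\sqrt d$ and $|a|\gg\sqrt d$ for reduced forms gives $|c|\ll\sqrt d$; I would spell this out using the explicit reduction inequalities from \cite{Buell}. For (ii), since we only want an inequality $S^+(X)\le N^+(\sqrt X)$, it is harmless to over-count on the right, so in fact no injectivity is even needed — each odd-order class yields at least one admissible triple, and distinct classes certainly cannot be forced to yield the \emph{same} triple more times than they are counted in $S^+(X)$; concretely, it suffices to exhibit, for each odd-order class, one reduced form, and observe these all lie among the triples counted by $N^+(\sqrt X)$. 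This makes the argument short once the reduction bounds are in hand.
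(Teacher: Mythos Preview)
Your approach is essentially the same as the paper's: invoke Landesman's correspondence to pass from odd-order ideal classes to forms with the resultant property, then use Buell's reduction theory for indefinite forms to bound the coefficients of a reduced representative, and finally observe that distinct classes yield distinct reduced triples.

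One point to tighten: in your sketch you write $|a|,|b|,|c|\ll\sqrt d$ and remark that ``any fixed normalisation that bounds all three coefficients by $O(\sqrt d)$ will do''. That is enough for the application to Lemma~\ref{lem:fish}, but not for the lemma \emph{as stated}, which asserts the sharp inequality $S^+(X)\le N^+(\sqrt X)$ with implied constant~$1$. The paper obtains this by reading off from Buell's inequalities $0<b<\sqrt d$ and $\sqrt d-b<2|a|<\sqrt d+b$ (and the analogue for $c$, \cite[Prop.~3.1]{Buell}) that in fact $|a|,b,|c|<\sqrt d<\sqrt X$ strictly. Since you already plan to cite these explicit inequalities, this is only a matter of writing $<$ rather than $\ll$.

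Your discussion of injectivity is more careful than the paper's (which glosses over it), and your observation that it suffices to exhibit one reduced form per class is correct; just note that the reason distinct odd-order classes, possibly of different discriminants, yield distinct triples is that reduced forms in different classes lie in disjoint cycles and the discriminant is determined by the triple --- so the map really is injective, not merely ``harmless to over-count''.
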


\begin{proof}
We adapt the arguments in the previous section to deal with quadratic number fields of positive  discriminant.  A quadratic form $q(x,y)=ax^2+bxy+cy^2$ with discriminant $d>0$ is \emph{reduced} (in the sense of \cite[Eq.~(3.1)]{Buell}) if 
 \[0<b<\sqrt{d},\quad \sqrt{d}-b<2|a|<\sqrt{d}+b.
 \]
Moreover,  a reduced form with discriminant $d>0$ further satisfies
 \[\sqrt{d}-b<2|c|<\sqrt{d}+b,\]
 by 
  \cite[Prop.~3.1]{Buell}.  Thus 
$$ |a|< \sqrt{d} ,\quad 0<b<\sqrt{d},\quad |c|<\sqrt{d},
$$
for any reduced binary quadratic form $q(x,y)$ with discriminant $d>0$.

Any binary quadratic form with 
discriminant  $d$ is equivalent to a reduced quadratic form of 
discriminant  $d$ by  \cite[Prop.~3.3]{Buell}. It follows from the work of Landesman \cite[Thm.~1.1]{aaron} that the sum
$S^+(X)$ is bounded by the number of 
reduced primitive positive definite quadratic forms $q$ of discriminant  $d\in (0,X)$, for which 
there exists 
an odd degree polynomial $R\in \ZZ[x]$ whose   resultant with $q(x,1)$ is $\pm 1$. 
Hence, in order to bound  $S^+(X)$, we can restrict to triples $(a,b,c)\in\Z$ in the range
$\max\{|a|,|b|,|c|\}\leq \sqrt{X}$
and the statement easily follows. 
\end{proof}

\end{document}